\documentclass[11pt,twoside,english]{article}
\usepackage{amsmath}
\usepackage{amsfonts}
\usepackage{mathrsfs}
\usepackage{color}
\usepackage{ amsmath, amsfonts, amssymb, amsthm, amscd}
\usepackage[T1]{fontenc}
\usepackage[english]{babel}
\usepackage[noinfoline]{imsart}



\setlength{\oddsidemargin}{5mm} \setlength{\evensidemargin}{5mm}
\setlength{\textwidth}{150mm} \setlength{\headheight}{0mm}
\setlength{\headsep}{12mm} \setlength{\topmargin}{0mm}
\setlength{\textheight}{220mm} \setcounter{secnumdepth}{2}

\newtheorem{theorem}{Theorem}
\newtheorem{lemma}{Lemma}
\newtheorem{proposition}{Proposition}

\newtheorem{definition}{Definition}
\newtheorem{corollary}{Corollary}
\newtheorem{remark}{Remark}




\newcommand{\be}{\begin{equation}}
\newcommand{\ee}{\end{equation}}
\newcommand{\beq}{\begin{eqnarray}}
\newcommand{\eeq}{\end{eqnarray}}




\newcommand{\ced}{\end{proof}}

\setlength{\parindent}{0cm}
\begin{document}

\begin{frontmatter}

\title{The Neumann Boundary Problem for Elliptic Partial Differential Equations with Nonlinear Divergence Terms}
\date{}
\runtitle{}
\author{\fnms{Xue}
 \snm{YANG}\corref{}\ead[label=e1]{xyang2013@tju.edu.cn}}
\address{Tianjin University
\\\printead{e1}}
\author{\fnms{Jing}
 \snm{ZHANG}\corref{}\ead[label=e2]{zhang\_jing@fudan.edu.cn}}
\address{Fudan University
\\\printead{e2}}

\runauthor{X. Yang and J. Zhang}

\begin{abstract}
We prove the existence and uniqueness of weak solution of a Neumann boundary problem for an elliptic partial differential equation (PDE for short)  with a singular divergence term which can only be understood in a weak sense. A probabilistic approach is applied by studying the backward stochastic differential equation (BSDE for short) corresponding to the PDE. 
\end{abstract}

\begin{keyword}
	\kwd{ stochastic partial differential equations, penalization method, It\^o's formula, backward stochastic differential equations, martingale decomposition, divergence form elliptic operators}
\end{keyword}
\begin{keyword}[class=AMS]
	\kwd[Primary ]{60H15; 35R60; 31B150}
\end{keyword}
\end{frontmatter}

\section{Introducton }
In this paper, our aim is to use the probabilistic method to solve the Neumann boundary value problem for semilinear second order elliptic PDE of the following form:
\begin{equation}
\label{PDE}
\left\{
\begin{split}
&\frac{1}{2}\Delta u+ \langle b, \nabla u\rangle+qu-div(g(\cdot,u,\nabla u))+f(\cdot,u,\nabla u)=0,\quad \mbox{on} \  D,\\
&\langle \nabla u-2 g(\cdot,u,\nabla u), \vec{n}\rangle+h(\cdot,u)=0,\quad \mbox{on}\   \partial D,
\end{split}
\right.
\end{equation}
where $D$ is a bounded domain in $\mathbb{R}^N$ with smooth boundary. $\vec n$ is the unit inward normal vector field of $D$ on the boundary $\partial D$. $f$, $g$ and $h$ are nonlinear measurable functions. $b=(b_1,\cdots,b_N)$ is a measurable $\mathbb{R}^N$-valued function on $D$ and $q$ is a measurable $\mathbb{R}$-valued function on $D$. Since $g$ is not differentiable,  the singular divergence term $''div g''$ involved in the equation will be understood as a distribution, and a classical Sobolev weak solution will be considered.

\vspace{2mm}
The probabilistic approaches to the boundary value problems for second order differential operators have been adopted by many authors and the earliest work went back to 1944 (see $\cite{K}$). There have been extensive studies on the Dirichlet boundary problems (see \cite{BH}, \cite{Gernard}, \cite{CZ}, \cite{DP}, \cite{PEI} and \cite{Z}). However, to our knowledge, there are few articles on the Neumann boundary problems.

\vspace{2mm}
When $b=0$, $g=0$ and $f=0$, the following linear Neumann boundary problem
\begin{equation*}
\left\{\begin{split}
&\frac{1}{2}\Delta u(x)+qu(x)=0 \quad\textrm{on $D$}\\
&\frac{1}{2}\frac{\partial u}{\partial \vec{n}}(x)=\phi(x) \quad\textrm{on $\partial D$ }
\end{split}\right.
\end{equation*}
was solved both in $\cite{BH}$ and $\cite{PEI}$. The solution was given in the following representation:
\begin{equation*}
u(x)=E^x[\int_{0}^{\infty}e^{\int_{0}^{t}q(B_{s})ds}\phi(B_{t})dL^{0}_{t}]\,,
\end{equation*}
where $(B_{t})_{t\geq0}$ is the reflecting Brownian motion in the domain $D$ associated with the infinitesimal generator $\frac{1}{2}\Delta$,
the process $\{L^{0}_{t}\}_{t\geq0}$ is the boundary local time expressed as $L^{0}_{t}=\int_{0}^{t}I_{\partial D}(B_{s})dL^0_s$.

\vspace{2mm}
In the case that a divergence term $div(g)$ is involved, even if $g$ is simply  independent of $\nabla u$ and in a linear form, i.e.  $ g( x,u,\nabla u) =\hat{B}(x)u(x)$,  where $\hat B$ is an integrable vector value function, it was found  that the generator $\mathcal{L}=\frac{1}{2}\Delta+b\cdot\nabla+div(\hat B\cdot)+q$ would not associate with any Markov processes in general. And the term $div(\hat{B}\cdot)$ can not be handled by Girsanov  or Feyman-Kac transformation directly either.  The analysis of the boundary value problems for this general operator $\mathcal{L}$ in the PDEs' literature (see e.g. \cite{GT}, \cite{T}) was always established under an extra condition:
$$-div(\hat{B}\cdot)+q\leq 0$$
in the sense of distribution. So that the maximum principle could be used.

\vspace{2mm}
The probabilistic approach for studying problem \eqref{PDE} in the case of $f=0$ and the linear divergence term $div(\hat{B}\cdot)$ was applied in \cite{CZ2} (see also \cite{CZ} for the corresponding Dirichlet boundary problem). The term $div(\hat{B}\cdot)$ was tackled by using the time-reversal Girsanov transform of the symmetric reflecting diffusion associated with the operator $\frac{1}{2}\nabla\cdot(A\nabla)$.

\vspace{2mm}
 In \cite{YZ},  problem $(\ref{PDE})$ with nonlinear $f(x,u,\nabla u)$  was studied, which generalized the result in \cite{CZ2}.  But the divergence term still had to be linear, since the  strategy of time-reversal and h-transform was used to transfer the operator $\mathcal{L}$ with divergence term to $\mathcal{L}_2=\frac{1}{2}\Delta+B\cdot\nabla+Q$, so that the Girsanov and Feyman-Kac transform can be applied in the further calculations.


\vspace{2mm}
The direct motivation of this article is to generalize the result in \cite{YZ}. When nonlinear divergence term $div (g(\cdot, u,\nabla u))$  is considered , it can not be treated as a part of the generator operator because of the nonlinearity. In order to deal with this singular term,  inspired by the method introduced in $\cite{S}$,  we consider firstly the case of $g$ independent of $u$ and $\nabla u$, then substitute the divergence term by $div(\nabla G)$ in the weak sense, where $G$ is a function in Dirichlet space so that the decomposition and calculus can be carried out in the framework of Dirichlet forms.  This method also allows us to give the solution a probabilistic interpretation in the form of a BSDE with forward-backward stochastic integration, which produces a candidate for the solution. The PDE with nonlinear divergence can be solved by Picard iteration with both analytic and probabilistic methods independently. 

\vspace{2mm}
Due to the probabilistic method that is applied in this paper to solve problem \eqref{PDE}, it turns out that we need to prove the existence and uniqueness of solution for the BSDE which connect to the PDE. The study on this kind of BSDEs,  with infinite time horizon, forward-backward stochastic integration and local time integration,  is actually of independent interest.

\vspace{2mm}
The article is organized as follows. In the second section we set notations and recall the decomposition of reflecting diffusions. Then the probabilistic interpretation of the divergence term is given in Section 3. In Section 4 we prove the existence and uniqueness of the solution for the PDEs with linear divergence terms. The last section is devoted to using both analytic and probabilistic methods to  solve the PDEs with nonlinear divergence terms by Picard iteration.

\section{Preliminaries}
The domain $D\subset\mathbb{R}^N$ is bounded with smooth boundary and we assume there is a smooth function $\psi$ such that
$$D=\{x\in\mathbb{R}^N| \psi(x)>0\}\qquad\mbox{and}\qquad \partial D=\{x\in\mathbb{R}^N|\psi(x)=0\}.$$
On $\partial D$, $\vec{n}:=\nabla \psi$ coincides with the unit vector pointing inward the interior of $D$.
Set function $d(x):=d(x,\bar{D})^2$ in a neighborhood of $\bar{D}$ , then $d(x)=0$ in $\bar{D}$ and $d(x)>0$ otherwise. 
The penalization term $\vec\delta(x):=\nabla d(x)$ satisfies $\langle \nabla \psi (x), \vec\delta(x)\rangle \leq 0$, for all $x\in\mathbb{R}^N$.  
Let $dx$ denote the Lebesgue measure on $\mathbb{R}^N$ and $d\sigma$ the $(N-1)$-dimensional Lebesgue measure on $\partial D$.
\vspace{2mm}

Let $L^2(D)$ be the space of square integrable functions on $D$ with the inner product and norm as follows

$$(f,g):=\int_D f(x)g(x)dx\qquad \mbox{and}\qquad \|f\|^2:=(f,f).$$

The first order Sobolev space on $D$ is denoted by $H^{1}(D)$:
$$H^{1}(D)=\{ f\in L^2(D)| \nabla f\in L^2(D)\}.$$

\vspace{3mm}
Suppose the measurable functions
\begin{eqnarray*}
f:\  \mathbb{R}^N\times \mathbb{R}\times \mathbb{R}^N\rightarrow \mathbb{R};\ \  h:\  \mathbb{R}^N\times \mathbb{R}\rightarrow \mathbb{R};\ \  g=(g_1,\cdots,g_N):\  \mathbb{R}^N\times \mathbb{R}\times \mathbb{R}^N\rightarrow \mathbb{R}^N
\end{eqnarray*}
satisfy the following conditions:  there are positive constants $\alpha,\beta, K, M, k,\beta'$ satisfying $K^2<2\alpha$, such that, for any $y,y'\in\mathbb{R}$, $z, z'\in\mathbb{R}^N$,

\vspace{2mm}
{\bf(i)} $(y-y')(f(x,y,z)-f(x,y',z))\leq -\alpha |y-y'|^2$;\\
{\bf(ii)} $(y-y')(h(x,y)-h(x,y'))\leq -\beta |y-y'|^2$;\\
{\bf(iii)} $|f(x,y,z)-f(x,y,z')|\leq K|z-z'|$;\\
{\bf(iv)} $f(x,y,z)$ and $h(x,y)$ is continuous with respect to $y$ for all $x\in D,z\in\mathbb{R}^N$, a.e.;\\
{\bf(v)} $|f(x,y,z)|\leq M(1+|y|+|z|)$, $|h(x,y)|\leq M(1+|y|)$ and $|g(x,y,z)|\leq M$;\\
{\bf(vi)} $|g(x,y,z)-g(x,y',z')|\leq k(|y-y'|+|z-z'|)$; \\
{\bf(vii)} $|h(x,y)-h(x,y')|\leq \beta' |y-y'|$.

\vspace{3mm}
Consider the operator
\begin{eqnarray*}
\mathcal{L}_{1}=\frac{1}{2}\sum_{i,j=1}^N \frac{\partial^2}{\partial x_ix_j}+\sum_{i=1}^N  b_i(x)\frac{\partial}{\partial x_i}
\end{eqnarray*}
on the domain D equipped with the Neumann boundary condition:
\begin{eqnarray*}
\frac{\partial}{\partial \vec{n}}:=\langle \vec{n},\nabla\cdot\rangle=0,\quad \textrm{on} \quad \partial D.
\end{eqnarray*}

Let $b:\mathbb{R}^N\rightarrow \mathbb{R}^N$  be uniformly bounded and satisfy Lipschitz condition, i.e. there exists a constant $C_0>0$, such that for any $x,x'\in \mathbb{R}^N$,
$$|b(x)-b(x')|\leq C_0 |x-x'|.$$

It is well known that there is a unique reflecting diffusion process $(\Omega, \mathcal{F}_{t}, X^{x}(t), P^x, x\in D)$ associated with the generator $\mathcal{L}_{1}$ (see \cite{LS}).


Let $E^x$ denote the expectation under the probability  measure $P^x$.

Then the process $X^{x}(t)$ has the following decomposition:
\begin{eqnarray}
\label{deomposition of X}
X^{x}(t)=X^{x}(0)+M^{x}(t)+\int_{0}^{t}{b}(X^{x}(s))ds+\int_{0}^{t}\vec{n}(X^{x}(s))dL_{s},\quad P^{x}-a.s..
\end{eqnarray}

Here, $M^{x}(t)$ is a $\mathcal{F}_{t}-$measurable square integrable continuous martingale additive functional. 
$L_t$ is a positive increasing continuous additive functional which is expressed as $L_{t}=\int_{0}^{t}I_{\{X^{x}(s)\in\partial D\}}dL_s$. 
$\{L_t, t\geq 0\}$ is called the boundary local time of $X$.

In the following discussion, we write $X^x(t)$ as $X_t$ or $X(t)$ for simplicity.

\vspace{3mm}
We assume the measurable function $q:\mathbb{R}^N\rightarrow \mathbb{R}$, $q\in L^{p}(D)$, for $p>\frac{N}{2}$, satisfying the following conditions:

\textbf{(C.1)} there exists $x_0\in D$ such that
$$E^{x_0}[\int_0^\infty e^{\int_0^tq(X_s)ds}dL_t]<+\infty\,;$$

\textbf{(C.2)} there exists $x_1\in D$ such that
$$E^{x_1}[\int_0^\infty e^{2\int_0^t q(X_s)ds}dL_t]<+\infty\,;$$

\textbf{(C.3)}  
$$
\sup_{x\in D}E^x[\int_0^\infty e^{2\int_0^t q(X_s)ds} |q(X_t)|^2dt ]<\infty\,.
$$

\vspace{3mm}
Finally, we give the definition of the solution in which we are interested in this article.
\begin{definition}
	A function $u\in H^{1}(D)$ is said to be a weak solution of PDE $(\ref{PDE})$ if for any test function $\phi\in C^{\infty}(D)$,
	\begin{equation}\label{weaksolution}\begin{split}
	&\frac{1}{2}\int_D \langle \nabla u,\nabla \phi\rangle(x)dx-\int_D \langle b,\nabla u\rangle(x)\phi(x)dx+\int_D q(x)u(x)\phi(x)dx\\
	=&\int_D f(x,u,\nabla u)\phi(x)dx+\int_{D}\langle g(\cdot,u,\nabla u),\nabla \phi\rangle(x)dx-\frac{1}{2}\int_{\partial D} h(x)\phi(x)d\sigma(x).
	\end{split}\end{equation}
\end{definition}

\section{Interpretation of the Divergence Term}
In this section, we will give a stochastic representation for the divergence term in $(\ref{PDE})$ which can be expressed as a measurable field. The second order operator in  $(\ref{PDE})$ is nonsymmetric and associated with a reflecting diffusion.

The bilinear form
 $$\mathcal{E}(u,v)=\frac{1}{2}\int_D \sum_{i=1}\frac{\partial u}{\partial x_i}\frac{\partial v}{\partial x_i}dx,\quad \forall u,v\in H^1(D)$$
is associated with the generator $L_0=\frac{1}{2}\Delta$ satisfying the Neumann boundary condition $\frac{\partial u}{\partial\vec{n}}=0$ on $\partial D$. Set the operator $Lu:=L_0 u+\langle b,\nabla u\rangle$. Then $L$ generates a semigroup $(P_t)_{t\geq 0}$ which possesses continuous densities $\{p(t,x,y),t\geq 0, x,y\in \bar{D}\}$.  It is well known that the reflecting diffusion \eqref{deomposition of X} is associated with operator $L$,
and for any $u\in H^1(D)$, the Fukushima decomposition is as follows
$$
u(X_t)-u(X_s)=Mu|^t_s+Nu|^t_s\,,
$$
where $Mu|^t_s:=\int_s^t \langle \nabla u(X_r),dB_r\rangle$ is the martingale additive functional and $Nu|^t_s$ is the zero-energy additive functional. 
For $u\in C(\bar{D})$,
$$
Nu|^t_s:=\int_s^t Lu(X_r)dr+\int_s^t \frac{\partial u}{\partial\vec{n}}(X_r)dL_r\,,
$$
where $L_t$ is the additive functional corresponding to the Lebesgue measure $\sigma(x)$ on $\partial D$. It follows that
$$
E^x[\int_{0}^{t}f(X_r)dL_r]=\int_{0}^{t}\int_{\partial{D}} p(r,x,y)f(y)\sigma(dy)dr.
$$

Consider the reverse process $(X_{T-t})_{t\in[0,T]}$ under the probability $P^{o}$, for $o\in\bar{D}$, with the non-homogenous transition function
\begin{eqnarray*}
Q_{0,t}u(x)=\frac{\int_D p(T-t,o,y)u(y)p(t,y,x)dy}{p(T,o,x)}.
\end{eqnarray*}
We denote the density of $Q_{0,t}$ by $p_Q(t,x,y)=\frac{p(T-t,o,y)p(t,y,x)}{p(T,o,x)}$.
\begin{lemma}
Fix $o\in \bar{D}$ and set $p_t(x)=p(t,o,x)$,
\begin{equation*}\begin{split}
Q_{0,t}u-u=\int_{0}^{t}Q_{0,r}(\frac{1}{2}\Delta u-\langle b,\nabla u\rangle
&+\frac{\langle \nabla p_{T-r},\nabla u\rangle}{p_{T-r}})dr\\&+\frac{1}{2}\int_{0}^{t}\int_{\partial{D}}p_Q(r,x,y)\frac{\partial u}{\partial \vec n}(y)\sigma(dy)dr.
\end{split}\end{equation*}
\end{lemma}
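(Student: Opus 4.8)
The plan is to prove the stated identity in its infinitesimal form and then integrate in time. Since $p(0,y,x)$ is the Dirac mass at $y=x$, we have $Q_{0,0}u(x)=u(x)$, so it suffices to compute $\frac{d}{dt}Q_{0,t}u(x)$ and show that it equals $Q_{0,t}\bigl(\frac12\Delta u-\langle b,\nabla u\rangle+\frac{\langle\nabla p_{T-t},\nabla u\rangle}{p_{T-t}}\bigr)(x)+\frac12\int_{\partial D}p_Q(t,x,y)\frac{\partial u}{\partial\vec n}(y)\,\sigma(dy)$, after which integration from $0$ to $t$ yields the lemma. Writing $Q_{0,t}u(x)=\frac{1}{p(T,o,x)}\int_D p(T-t,o,y)\,u(y)\,p(t,y,x)\,dy$ and noting that the prefactor $p(T,o,x)$ is independent of $t$, I would differentiate under the integral sign; the continuity and interior smoothness of the densities (available here since $\partial D$ is smooth and $b$ is bounded Lipschitz) make this legitimate.

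For the $t$-derivative I would use the two Kolmogorov equations in the variable $y$: the forward (Fokker--Planck) equation $\partial_s p(s,o,y)=L^{*}_y p(s,o,y)$ for the factor $p(T-t,o,y)$, and the backward equation $\partial_t p(t,y,x)=L_y p(t,y,x)$ for the factor $p(t,y,x)$, where $L=\frac12\Delta+\langle b,\nabla\cdot\rangle$ and $L^{*}g=\frac12\Delta g-\langle b,\nabla g\rangle-(\operatorname{div}b)g$. This turns $\frac{d}{dt}Q_{0,t}u(x)$ into the integral over $D$ of $-(L^{*}_y a)\,u\,c+a\,u\,(L_y c)$, with $a(y)=p(T-t,o,y)$ and $c(y)=p(t,y,x)$. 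The heart of the argument is then a double integration by parts in $y$ transferring the operators off the densities and onto $u$. The drift contributions reassemble, via $\operatorname{div}(uac\,b)$, into the interior term $-\int_D ac\,\langle b,\nabla u\rangle\,dy$, while the second-order contributions, after using the cancellation $\nabla(ua)\cdot\nabla c-\nabla(uc)\cdot\nabla a=\nabla u\cdot(a\nabla c-c\nabla a)$ followed by one further integration by parts, produce $\int_D ac\,\tfrac12\Delta u\,dy+\int_D c\,\langle\nabla a,\nabla u\rangle\,dy=\int_D ac\bigl(\tfrac12\Delta u+\frac{\langle\nabla p_{T-t},\nabla u\rangle}{p_{T-t}}\bigr)dy$. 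Dividing by $p(T,o,x)$, these are exactly $Q_{0,t}\bigl(\frac12\Delta u-\langle b,\nabla u\rangle+\frac{\langle\nabla p_{T-t},\nabla u\rangle}{p_{T-t}}\bigr)(x)$, so the interior operator comes out correctly, the $h$-transform term $\frac{\langle\nabla p_{T-t},\nabla u\rangle}{p_{T-t}}$ arising precisely from the surviving cross-gradient term $\int_D c\,\langle\nabla a,\nabla u\rangle$.

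The delicate part, and what I expect to be the main obstacle, is the bookkeeping of the boundary integrals produced by these integrations by parts. Here I would use the boundary conditions of the two densities: $p(t,y,x)$, as a function of its starting point $y$, satisfies the Neumann condition $\partial_\nu c=0$ of the reflecting generator, whereas $p(T-t,o,y)$, as a function of its terminal point $y$, satisfies the conormal (adjoint reflecting) condition $\partial_\nu a=2\langle b,\nu\rangle a$ on $\partial D$ — this last condition is forced by demanding that $\int_D (Lf)g=\int_D f\,(L^{*}g)$ hold without boundary remainder whenever $f$ obeys Neumann. With these, the boundary term $\frac12\int_{\partial D}u\,a\,\partial_\nu c\,d\sigma$ vanishes, and the drift-generated boundary term $\int_{\partial D}u\,a\,c\,\langle b,\nu\rangle\,d\sigma$ cancels exactly against $-\frac12\int_{\partial D}u\,c\,\partial_\nu a\,d\sigma$. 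The only surviving boundary contribution is $-\frac12\int_{\partial D}ac\,\partial_\nu u\,d\sigma=\frac12\int_{\partial D}ac\,\frac{\partial u}{\partial\vec n}\,d\sigma$ (recall $\vec n=-\nu$), which after dividing by $p(T,o,x)$ is precisely $\frac12\int_{\partial D}p_Q(t,x,y)\frac{\partial u}{\partial\vec n}(y)\,\sigma(dy)$.

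As a consistency check I would observe that the identity is nothing but Dynkin's formula for the time-reversed reflecting diffusion $X_{T-t}$, whose time-inhomogeneous generator is $\frac12\Delta-\langle b,\nabla\cdot\rangle+\frac{\langle\nabla p_{T-t},\nabla\cdot\rangle}{p_{T-t}}$ by classical time-reversal theory, and whose normal reflection accounts for the boundary-local-time integral over $\partial D$; this gives an independent derivation and fixes the sign and the coefficient $\frac12$ of the boundary term.
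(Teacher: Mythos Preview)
Your proposal is correct and rests on the same ingredients as the paper's proof---the Kolmogorov forward/backward equations for the two density factors and Green's formula---but the two are organized in opposite directions. The paper starts from the right-hand side: it writes $p_T(x)\int_0^t Q_{0,r}\bigl(\tfrac12\Delta u-\langle b,\nabla u\rangle\bigr)\,dr$ as a space--time integral, integrates by parts in $y$ to transfer $\tfrac12\Delta-\langle b,\nabla\cdot\rangle$ onto the densities, then invokes $L^{*}p(t,o,y)=\partial_t p(t,o,y)$ to recognize a total time derivative and telescopes to $Q_{0,t}u-u$. You instead differentiate $Q_{0,t}u$ in $t$, apply the forward and backward equations immediately, and only then integrate by parts back onto $u$; your version is essentially Dynkin's formula for the reversed process, as you note. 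Both routes are equivalent, but yours is more explicit about the boundary bookkeeping: the paper's proof is terse and does not spell out that $p(t,y,x)$ satisfies Neumann in $y$ while $p(T-t,o,y)$ satisfies the conormal condition $\partial_\nu a=2\langle b,\nu\rangle a$, which is exactly what makes all boundary contributions except $\tfrac12\int_{\partial D}ac\,\partial u/\partial\vec n\,d\sigma$ cancel. Your identification of that conormal condition and the resulting cancellation is the one genuinely nontrivial point, and you have it right.
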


\begin{proof}
\begin{equation*}\begin{split}
&p_T(x)\int_{0}^{t}Q_{0,r}(\frac{1}{2}\Delta u-\langle b,\nabla u\rangle)dr\\
=&\int_{0}^{t}\int_D p(T-r,o,y)(\frac{1}{2}\Delta u-\langle b,\nabla u\rangle)(y)p(r,y,x)dy\\
=&-\int_{0}^{t}\int_D L^*p_{T-r}(y)u(y)p(r,y,x)dydr+\int_{0}^{t}\int_DL^*p(r,y,x)p(T-r,o,y)u(y)dydr\\
&-\int_{0}^{t}\int_D \langle \nabla p_{T-r},\nabla u\rangle p(r,y,x)dydr
-\frac{1}{2}\int_0^t\int_{\partial D}p(T,o,y)\frac{\partial u}{\partial \vec{n}}(y)p(r,y,x)\sigma(dy)dr\\
=& \int_D p(T-t,o,y)u(y)p(t,y,x)dy-p_T(x)u(x)-\int_{0}^{t}\int_D \langle \nabla p_{T-r},\nabla u\rangle p(r,y,x)dydr\\
&-\frac{1}{2}\int_0^t\int_{\partial D}p(T,o,y)\frac{\partial u}{\partial \vec{n}}(y)p(r,y,x)\sigma(dy)dr,
\end{split}\end{equation*}
where the last equality is obtained by $L^* p(t,o,y)=\partial_t p(t,o,y)$.
\end{proof}

\begin{proposition}
Fix $o\in \bar{D}$ and set the following process
\begin{equation*}\begin{split}
\bar{M}u|^T_{T-t}:=u(X_{T-t})&-u(X_T)-\int_{0}^{t}(\frac{1}{2}\Delta u-\langle b,\nabla u\rangle)(X_{T-r})dr
\\&-\int_{0}^{t}\frac{\langle \nabla p_{T-r},\nabla u\rangle(X_{T-r})}{p_{T-r}(X_{T-r})}dr-\int_{T-t}^{T}\frac{\partial u}{\partial \vec n}(X_r)dL_r.
\end{split}\end{equation*}
\vspace{2mm}

(1). $\{\bar{M}u|^T_{T-t}\}_{t\in[0,T]}$ is a martingale with respect of the filtration $\mathcal{F}^{'}_t=\sigma\{X_{T-s},s\in[0,t]\}$
and $$\bar{M}u|^T_t-\bar{M}u|^T_s=\bar{M}u|^s_{t}.$$
(2). The following relation holds:
$$u(X_t)-u(X_0)=\frac{1}{2}Mu|^t_0-\frac{1}{2}(\bar{M}u|^T_0-\bar{M}u|^T_t)+\int_{0}^{t}\langle b,\nabla u\rangle (X_r)dr
-\frac{1}{2}\int_{0}^{t}\frac{\langle \nabla p_{r},\nabla u\rangle(X_{r})}{p_{r}(X_{r})}dr.$$
\end{proposition}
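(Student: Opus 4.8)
The plan is to establish both assertions using the forward Fukushima decomposition together with the time-reversal identity provided by the Lemma. For part (1), I would first recognize that $\bar{M}u|^T_{T-t}$ is precisely the martingale part of the reversed process $(X_{T-t})_{t\in[0,T]}$ with respect to the backward filtration $\mathcal{F}'_t$. The Lemma computes the infinitesimal generator of the non-homogeneous transition semigroup $Q_{0,t}$: its drift is exactly $\frac{1}{2}\Delta u-\langle b,\nabla u\rangle+\frac{\langle\nabla p_{T-r},\nabla u\rangle}{p_{T-r}}$ in the interior, plus the boundary local-time contribution. Thus subtracting off the corresponding additive functionals from $u(X_{T-t})-u(X_T)$ produces a process whose conditional increments vanish, i.e.\ a martingale. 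I would verify the martingale property by checking that for $s<t$, $E[\bar{M}u|^T_{T-t}-\bar{M}u|^T_{T-s}\mid\mathcal{F}'_s]=0$, which reduces to applying the Markov property of the reversed process and matching the drift against the $Q_{0,r}$-generator from the Lemma. The additive-functional identity $\bar{M}u|^T_t-\bar{M}u|^T_s=\bar{M}u|^s_t$ then follows from the pathwise additivity of each defining term over the relevant time intervals.

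For part (2), the strategy is to combine the forward decomposition with the backward one by adding the two and solving for $u(X_t)-u(X_0)$. The forward Fukushima decomposition gives
$$
u(X_t)-u(X_0)=Mu|^t_0+\int_0^t Lu(X_r)\,dr+\int_0^t\frac{\partial u}{\partial\vec n}(X_r)\,dL_r,
$$
with $Lu=\frac{1}{2}\Delta u+\langle b,\nabla u\rangle$, while the backward process supplies a second equation for the same increment via $\bar{M}u|^T_0-\bar{M}u|^T_t$ evaluated over $[0,t]$. The key cancellation is that the singular boundary local-time terms $\int\frac{\partial u}{\partial\vec n}\,dL_r$ appear in both decompositions and are eliminated when one forms the symmetric average $\frac{1}{2}(\text{forward})-\frac{1}{2}(\text{backward})$; simultaneously the $\frac{1}{2}\Delta u$ contributions cancel, leaving only the first-order drift $\langle b,\nabla u\rangle$ and the logarithmic-derivative correction $\frac{\langle\nabla p_r,\nabla u\rangle}{p_r}$. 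The coefficient $\frac{1}{2}$ in front of $Mu|^t_0$ and $\bar{M}u|^T$ is exactly what is needed to balance these terms.

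The main obstacle I anticipate is the careful bookkeeping of the time-reversal, specifically relating the reversed martingale increment over $[T-t,T]$ to the quantity $\bar{M}u|^T_0-\bar{M}u|^T_t$ and ensuring the local-time integral $\int_{T-t}^T\frac{\partial u}{\partial\vec n}(X_r)\,dL_r$ in the backward definition matches the forward boundary term $\int_0^t\frac{\partial u}{\partial\vec n}(X_r)\,dL_r$ after reversal. Since $L_t$ is an additive functional supported on $\partial D$, its reversed version must be handled using the stationarity of the boundary measure under the Markovian duality; I would justify this using the identity $E^x[\int_0^t f(X_r)\,dL_r]=\int_0^t\int_{\partial D}p(r,x,y)f(y)\,\sigma(dy)\,dr$ stated before the Lemma, which encodes the self-adjointness needed for the two local-time terms to coincide. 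Once this reversal of the local time is established rigorously, the remaining computation is a routine linear combination of the two decompositions, and the factor $\frac{1}{2}\frac{\langle\nabla p_r,\nabla u\rangle}{p_r}$ emerges as the surviving half of the correction term.
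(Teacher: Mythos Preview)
Your approach is essentially the same as the paper's: the martingale property in (1) comes from the Lemma identifying the generator of the reversed semigroup $Q_{0,t}$, the additive identity $\bar{M}u|^T_t-\bar{M}u|^T_s=\bar{M}u|^s_t$ is checked by direct substitution, and (2) is obtained by writing $u(X_t)-u(X_0)$ once via the forward Fukushima decomposition and once via the backward one, then adding and dividing by two. One small simplification: your anticipated obstacle about reversing the local-time integral is not actually present. The definition of $\bar{M}u|^T_{T-t}$ already records the boundary term in \emph{forward} time as $\int_{T-t}^{T}\frac{\partial u}{\partial\vec n}(X_r)\,dL_r$, so when you form $\bar{M}u|^T_0-\bar{M}u|^T_t$ the local-time contribution is simply $-\int_0^t\frac{\partial u}{\partial\vec n}(X_r)\,dL_r$ by subtraction of ordinary Stieltjes integrals, and it cancels algebraically against the forward term $+\int_0^t\frac{\partial u}{\partial\vec n}(X_r)\,dL_r$; no duality or self-adjointness argument is needed at this step.
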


\begin{proof}
Since
\begin{equation*}\begin{split}
\bar{M}u|^T_{t}:=u(X_{t})-u(X_T)&-\int_{0}^{T-t}(\frac{1}{2}\Delta u-\langle b,\nabla u\rangle)(X_{T-r})dr
\\&-\int_{0}^{T-t}\frac{\langle \nabla p_{T-r},\nabla u\rangle(X_{T-r})}{p_{T-r}(X_{T-r})}dr-\int_{t}^{T}\frac{\partial u}{\partial \vec n}(X_r)dL_r, 
\end{split}
\end{equation*}
it follows that 
\begin{equation*}\begin{split}
\bar{M}u|^T_{t}-\bar{M}u|^T_{s}&=u(X_{t})-u(X_{s})-\int_0^{s-t} (\frac{1}{2}\Delta u-\langle b,\nabla u\rangle)(X_{s-r})dr\\
&-\int_{0}^{s-t}\frac{\langle \nabla p_{s-r},\nabla u\rangle(X_{s-r})}{p_{s-r}(X_{s-r})}dr
-\int_{t}^{s}\frac{\partial u}{\partial \vec n}(X_{r})dL_r=\bar{M}u|^s_t
\end{split}
\end{equation*}
and
\begin{equation*}\begin{split}
u(X_t)-u(X_0)=&\,\bar{M}u|^T_t-\bar{M}u|^T_0-\int_{T-t}^{T}(\frac{1}{2}\Delta u-\langle b,\nabla u\rangle)(X_{T-r})dr
\\&-\int_{0}^{t}\frac{\partial u}{\partial \vec n}(X_{r})d{L}_r
-\int_{T-t}^{T}\frac{\langle \nabla p_{T-r},\nabla u\rangle(X_{T-r})}{p_{T-r}(X_{T-r})}dr\\
=&-\bar{M}u|^t_0-\int_{0}^{t}(\frac{1}{2}\Delta u-\langle b,\nabla u\rangle)(X_{r})dr
\\&-\int_{0}^{t}\frac{\partial u}{\partial \vec n}(X_{r})d{L}_r-\int_{0}^{t}\frac{\langle \nabla p_{r},\nabla u\rangle(X_{r})}{p_{r}(X_{r})}dr.
\end{split}
\end{equation*}
Then
\begin{equation*}
2(u(X_t)-u(X_0))=Mu|^t_0-\bar{M}u|^t_0+2\int_{0}^{t}\langle b,\nabla u\rangle (X_r)dr
-\int_{0}^{t}\frac{\langle \nabla p_{r},\nabla u\rangle(X_{r})}{p_{r}(X_{r})}dr.
\end{equation*}

Therefore, we get the forward-backward martingale decomposition
$$u(X_t)-u(X_0)=\frac{1}{2}Mu|^t_0-\frac{1}{2}\bar{M}u|^t_0+\int_{0}^{t}\langle b,\nabla u\rangle (X_r)dr
-\frac{1}{2}\int_{0}^{t}\frac{\langle \nabla p_{r},\nabla u\rangle(X_{r})}{p_{r}(X_{r})}dr.$$
\end{proof}

\begin{corollary}
(1). For $u,v\in H^1(D)$,
$$\langle Mu, Mv\rangle_t=\int_{0}^{t}\langle \nabla u, \nabla v\rangle(X_r)dr$$ and 
$$\langle \bar{M}u|^T_\cdot, \bar{M}v|^T_\cdot \rangle_t=\int_{t}^{T}\langle \nabla u, \nabla v\rangle(X_r)dr.$$
(2). For $x=(x_1,\cdots,x_N)\in D$, set $u_i (x)=x_i$, $M^i(t)=Mu_i|^t_0$ and $\bar{M}^i(t,T)=\bar{M}u_i|^T_t$, then
$$X^i_t-X^i_0=\frac{1}{2}M^i(t)-\frac{1}{2}\bar{M}^i(0,t)+\int_{0}^{t} b_i(X_r)dr
-\frac{1}{2}\int_{0}^{t}\frac{\partial_i p(X_{r})}{p_{r}(X_{r})}dr.$$
\end{corollary}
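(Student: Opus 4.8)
The plan is to read both statements off the forward--backward decomposition of the preceding Proposition, supplemented by the description of the reversed generator in the Lemma.

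For the forward bracket in part (1) I would argue directly from $Mu|^t_0=\int_0^t\langle\nabla u(X_r),dB_r\rangle$. Since $B$ is the $N$-dimensional martingale part of the reflecting diffusion with $\langle B^i,B^j\rangle_r=\delta_{ij}r$, the covariation of the two It\^o integrals is $\langle Mu,Mv\rangle_t=\int_0^t\langle\nabla u,\nabla v\rangle(X_r)\,dr$; for $u,v\in H^1(D)$ this is exactly the Revuz/energy-measure identity for the symmetric form $\mathcal E$. For the backward bracket I would use the Lemma, which exhibits the time-reversed process $(X_{T-t})$ as a non-homogeneous diffusion whose generator at time $r$ is $\frac12\Delta-\langle b,\nabla\cdot\rangle+\langle\nabla p_{T-r},\nabla\cdot\rangle/p_{T-r}$ (plus a Neumann boundary term). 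Applying the Lemma to $u$, to $v$, and to the product $uv$, and forming the carr\'e du champ combination, all first-order drift terms and the boundary terms cancel by the Leibniz rule, leaving only $\langle\nabla u,\nabla v\rangle$; thus the reversed dynamics has the same square field as the forward one. Hence the bracket of the backward martingale accumulated from $T$ down to $t$ is $\langle\bar Mu|^T_\cdot,\bar Mv|^T_\cdot\rangle_t=\int_t^T\langle\nabla u,\nabla v\rangle(X_r)\,dr$.

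For part (2) I would simply specialize the Proposition's decomposition to the coordinate functions $u_i(x)=x_i$. These are smooth, so they lie in $H^1(D)\cap C(\bar D)$ and satisfy $\nabla u_i=e_i$ and $\frac12\Delta u_i=0$; consequently $\langle b,\nabla u_i\rangle=b_i$ and $\langle\nabla p_r,\nabla u_i\rangle/p_r=\partial_i p_r/p_r$. With $M^i(t)=Mu_i|^t_0$ and $\bar M^i(0,t)=\bar Mu_i|^t_0$ by definition, the identity of the Proposition becomes verbatim the claimed formula.

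The main obstacle is the backward bracket. One must justify that the quadratic covariation of $\bar Mu|^T_\cdot$ is governed solely by the second-order part of the reversed generator, i.e.\ that the (possibly singular) reversed drift $-b+\nabla p_{T-r}/p_{T-r}$, being of bounded variation along paths, contributes nothing to the bracket. Because $\nabla p_{T-r}/p_{T-r}$ blows up as $r\uparrow T$, I expect to carry out the identification on intervals $[0,T-\delta]$ and then let $\delta\downarrow0$, using the continuity of $p$ and its strict positivity on $\bar D$ away from the terminal time to control the limit.
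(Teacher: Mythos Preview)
Your proposal is correct. The paper itself gives no proof for this Corollary at all: it is stated immediately after the Proposition and treated as a direct consequence, with part (2) read off verbatim by substituting $u_i(x)=x_i$ into the forward--backward decomposition, and part (1) regarded as following from the explicit form of $Mu$ and $\bar Mu$ together with the fact that the second-order part of both the forward and reversed generators is $\tfrac12\Delta$. Your argument is precisely the natural filling-in of these implicit steps; the carr\'e du champ computation you outline for the backward bracket is the standard way to see that only the diffusion coefficient contributes, and your caution about the behavior of $\nabla p_{T-r}/p_{T-r}$ near $r=T$ is appropriate but not something the paper addresses (nor is it strictly needed, since the bracket statement is for $t<T$ and the reversed martingale is well defined on $[0,T)$).
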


\vspace{4mm}
For $g=(g_1,\cdots,g_N): \mathbb{R}^N\rightarrow \mathbb{R}^N$, we define the backward stochastic integral 
\begin{equation*}
\int_s^t g_i(X_{r})d\bar{M}^i_t:=(L^2-)\lim_{\delta\rightarrow 0}\sum_{j=0}^{n-1} g(X_{t_{j+1}})\bar{M}^i(t_j,t_{j+1}),
\end{equation*}
where the limit is over the partition $s=t_0<t_1<\cdots<t_n=t$ and $\delta=\max_j (t_{j+1}-t_j)$.

Define
\begin{equation}
\int_s^t g\ast dX_r=\int_s^t g(X_r) dM_r+\int_s^t g(X_r)d\bar{M}_r
+\int_s^t \frac{\langle g, \nabla p_r\rangle}{p_r}(X_r)dr+2\int_s^t \langle g,\vec n\rangle(X_r)dL_r\,.
\end{equation}

\begin{proposition}\label{decompositionofG}
For $G\in H^1(D)$, then we have the decomposition
\begin{equation}
G(X_t)-G(X_s)=\int_s^t \langle \nabla G(X_r),dM_r\rangle+\int_s^t \langle b,\nabla G\rangle(X_r)dr+\int_s^t \frac{\partial G}{\partial \vec n}(X_r)dL_r-\frac{1}{2}\int_s^t \nabla G\,\ast\,dX_r.
\end{equation}
\end{proposition}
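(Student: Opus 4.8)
The plan is to reduce the stated formula to the forward--backward martingale decomposition of the preceding Proposition applied to $u=G$, so that the only genuinely new ingredient is the identification of the backward martingale part $\bar{M}G$ with the backward stochastic integral $\int_s^t\langle\nabla G,d\bar{M}_r\rangle:=\sum_{i=1}^N\int_s^t\partial_i G(X_r)\,d\bar{M}^i_r$. First I would substitute the definition of $\int_s^t\nabla G\ast dX_r$ into the right-hand side of the claimed identity. Using $\langle\nabla G,\vec{n}\rangle=\partial G/\partial\vec{n}$, the explicit boundary integral $\int_s^t\frac{\partial G}{\partial\vec{n}}(X_r)\,dL_r$ cancels against the contribution $-\tfrac12\cdot 2\int_s^t\langle\nabla G,\vec{n}\rangle(X_r)\,dL_r$ coming from the $\ast$-operator, the two forward martingale integrals combine into $\tfrac12\int_s^t\langle\nabla G(X_r),dM_r\rangle$, and the right-hand side collapses to
$$\tfrac12\int_s^t\langle\nabla G,dM_r\rangle-\tfrac12\int_s^t\langle\nabla G,d\bar{M}_r\rangle+\int_s^t\langle b,\nabla G\rangle(X_r)\,dr-\tfrac12\int_s^t\frac{\langle\nabla p_r,\nabla G\rangle}{p_r}(X_r)\,dr.$$
The forward--backward decomposition (extended from $[0,t]$ to $[s,t]$ via the additivity $\bar{M}G|^T_s-\bar{M}G|^T_t=\bar{M}G|^t_s$) states that $G(X_t)-G(X_s)$ equals this same expression with $\int_s^t\langle\nabla G,dM_r\rangle$ and $\int_s^t\langle\nabla G,d\bar{M}_r\rangle$ replaced by $MG|_s^t$ and $\bar{M}G|_s^t$. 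As the drift and $p_r$ terms already agree, everything reduces to the two identifications $MG|_s^t=\int_s^t\langle\nabla G,dM_r\rangle$ and $\bar{M}G|_s^t=\int_s^t\langle\nabla G,d\bar{M}_r\rangle$.

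The forward identity is the classical Fukushima representation of the martingale additive functional, in accordance with the bracket $\langle MG\rangle_t=\int_0^t|\nabla G|^2(X_r)\,dr$ of the Corollary, so the substance lies in the backward one. I would first note that the backward integral is well defined: taking $u_i(x)=x_i$ in the Corollary gives $\langle\bar{M}^i|^T_\cdot,\bar{M}^j|^T_\cdot\rangle_t=\int_t^T\delta_{ij}\,dr$, so the backward It\^o isometry makes the defining Riemann sums an $L^2$-Cauchy net whose limit is a backward martingale additive functional for the reversed filtration $\mathcal{F}'_t$. I would then prove $\bar{M}G|_s^t=\int_s^t\langle\nabla G,d\bar{M}_r\rangle$ by showing that the difference has vanishing quadratic variation. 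Indeed, by the bracket rule $\langle N,\int f\,d\bar{M}^i\rangle=\int f\,d\langle N,\bar{M}^i\rangle$ together with the Corollary, the quadratic variation of $\bar{M}G$, the quadratic variation of $\int\langle\nabla G,d\bar{M}\rangle$, and their cross bracket all equal $\int_t^T|\nabla G|^2(X_r)\,dr$; hence the difference is a continuous (backward) martingale of zero quadratic variation and therefore vanishes.

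For $G\in C^2(\bar{D})$ every term above is classical and $\bar{M}G$ is given explicitly by the preceding Proposition. For general $G\in H^1(D)$ the individual surface integrals $\int_s^t\frac{\partial G}{\partial\vec{n}}(X_r)\,dL_r$ may fail to make classical sense, but since they cancel in the assembled right-hand side the identity itself is well posed; I would establish it by approximating $G$ by $G_n\in C^\infty(\bar{D})$ with $G_n\to G$ in the $H^1$-norm. The forward and backward martingale terms then converge in $L^2(P^x)$ by the isometries of the Corollary, both being bounded by $\int|\nabla(G-G_n)|^2(X_r)\,dr\to0$, which simultaneously absorbs the implicit boundary local-time content of $\bar{M}$; the drift and $p_r$ integrals converge by the density bound on $p$ and the bound $|g|\le M$. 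I expect the principal obstacle to be the backward identification itself, because it requires setting up the time-reversed stochastic calculus for the non-symmetric reflecting diffusion---the existence of the backward It\^o integral, its isometry, and its bracket rule---all relative to the reversed filtration $\mathcal{F}'_t$ rather than the natural one, in contrast with the routine forward and purely algebraic steps. Reassembling the matched terms then yields the asserted decomposition.
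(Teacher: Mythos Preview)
Your approach is correct and is precisely the derivation the paper has in mind: the proposition is stated there without proof because it follows by substituting the definition of $\int_s^t\nabla G\ast dX_r$ into the forward--backward decomposition of the preceding Proposition, with the only non-tautological step being the backward identification $\bar{M}G|^t_s=\int_s^t\langle\nabla G,d\bar{M}_r\rangle$, which you justify by the standard zero-quadratic-variation argument using Corollary~1. One small slip: in your approximation step the bound ``$|g|\le M$'' refers to the PDE coefficient $g$, not to $\nabla G$, so the convergence of the $\nabla p_r/p_r$ term should instead be argued from $\nabla G_n\to\nabla G$ in $L^2$ together with integrability of $\nabla p_r/p_r$; in practice the paper only applies the proposition to $G\in C^{1,1}(\bar D)$ (cf.\ Remark~1), so the general $H^1$ case is not actually needed.
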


The following lemma is very important in the interpretation of the divergence term $div g$ in PDE $(\ref{PDE})$.
\begin{lemma}
\label{div g}
For $g\in L^2(\mathbb{R}^N;\mathbb{R}^N)$, if there is a function $G\in L^2(\mathbb{R}^N)$, such that $div g=G$ in weak sense, then
\begin{equation*}
\int_s^t G(X_r)dr=-\int_s^t g\ast dX_r.
\end{equation*}
\end{lemma}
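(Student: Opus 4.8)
The plan is to establish the identity first for smooth $g$ by a direct forward--backward computation, and then to reach the general case $g\in L^2(\mathbb{R}^N;\mathbb{R}^N)$ with $div\, g=G\in L^2(\mathbb{R}^N)$ by approximation. The mechanism for the smooth case is to unwind the definition of $\int_s^t g\ast dX_r$ and to rewrite the backward stochastic integral against $\bar M$ in terms of the forward integral against $M$; the It\^o correction produced by this rewriting is precisely $\int_s^t div\, g(X_r)\,dr$, while all the remaining contributions cancel in pairs.

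First I would apply the forward--backward decomposition and the Fukushima decomposition to the coordinate functions $u_i(x)=x_i$ (exactly as in the Corollary) to obtain, for the martingale parts $M^i=Mu_i$ and $\bar M^i=\bar Mu_i$, the increment identity
\[
\bar M u_i\big|_s^t=-M u_i\big|_s^t-2\int_s^t n_i(X_r)\,dL_r-\int_s^t\frac{\partial_i p_r}{p_r}(X_r)\,dr ,
\]
with $\vec n=(n_1,\dots,n_N)$. Writing out $\int_s^t g\ast dX_r=\sum_i\int_s^t g_i\,dM^i+\sum_i\int_s^t g_i\,d\bar M^i+\int_s^t\frac{\langle g,\nabla p_r\rangle}{p_r}\,dr+2\int_s^t\langle g,\vec n\rangle\,dL_r$ and inserting the previous display into each backward integral $\int_s^t g_i\,d\bar M^i$, the finite--variation pieces $-2\int g_in_i\,dL_r$ and $-\int g_i\frac{\partial_i p_r}{p_r}\,dr$ reproduce, after summation over $i$, the last two terms of the definition with opposite sign and cancel them. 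One is left with $\sum_i\int_s^t g_i\,dM^i-\sum_i\int_s^t g_i\,d^{+}M^i$, where $d^{+}$ records that the backward integral forces evaluation of $g_i$ at the right endpoint of each subinterval.

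The key step is the evaluation of this mismatch. The right--endpoint integral of $g_i$ against $M^i$ differs from the forward (left--endpoint) It\^o integral by the quadratic covariation $\langle g_i(X),M^i\rangle_s^t$, and by part (1) of the Corollary this covariation equals $\langle Mg_i,Mu_i\rangle_s^t=\int_s^t\langle\nabla g_i,\nabla u_i\rangle(X_r)\,dr=\int_s^t\partial_i g_i(X_r)\,dr$. Summing over $i$ turns the mismatch into $-\int_s^t div\, g(X_r)\,dr$, so that $\int_s^t g\ast dX_r=-\int_s^t div\, g(X_r)\,dr=-\int_s^t G(X_r)\,dr$ for smooth $g$. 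As a consistency check, when $g=\nabla\Phi$ is a gradient the same conclusion follows from Proposition \ref{decompositionofG} together with the Fukushima decomposition of $\Phi$: equating the two resulting expressions for $\Phi(X_t)-\Phi(X_s)$ forces $\tfrac12\int_s^t\nabla\Phi\ast dX_r+\tfrac12\int_s^t\Delta\Phi(X_r)\,dr=0$.

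To remove smoothness I would take smooth mollifications $g_n$ of $g$, so that $g_n\to g$ and $G_n:=div\, g_n\to G$ in $L^2(\mathbb{R}^N)$ (mollification commutes with the divergence), whence the smooth case gives $\int_s^t g_n\ast dX_r=-\int_s^t G_n(X_r)\,dr$. The right--hand side converges in $L^2(P^x)$ because $E^x\int_s^t|G_n-G|^2(X_r)\,dr=\int_s^t\!\int_D p(r,x,y)|G_n-G|^2(y)\,dy\,dr\to0$ by boundedness of the continuous densities, and the two martingale integrals and the density--drift term on the left converge for the same reason. I expect the real obstacle to be the boundary local--time term $2\int_s^t\langle g_n,\vec n\rangle\,dL_r$: passing to the limit here requires controlling the normal trace of $g$ on $\partial D$, which is meaningful only because $g\in L^2$ together with $div\, g\in L^2$ places $g$ in $H(\mathrm{div})$, so that $\langle g,\vec n\rangle$ exists in $H^{-1/2}(\partial D)$. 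The hypothesis that $div\, g=G$ holds on all of $\mathbb{R}^N$, and not merely on $D$, is precisely what rules out a singular surface contribution across $\partial D$ and lets the local--time term survive the limit, yielding the identity for general $g$.
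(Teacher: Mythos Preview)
The paper states this lemma without proof; it appears immediately after Proposition~\ref{decompositionofG}, and the surrounding forward--backward calculus is credited to Stoica~\cite{S}. There is therefore no written argument in the paper to compare against, and on its own merits your proposal is correct.

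Your smooth-case computation is right. Combining the forward--backward decomposition in the Corollary with the Skorokhod decomposition~\eqref{deomposition of X} yields the increment identity for $\bar M^i$ you wrote; substituting it into the definition of $\int g\ast dX$ cancels the local-time and density-drift contributions, and the residual difference between right-endpoint and left-endpoint Riemann sums of $g_i$ against $M^i$ is precisely $[g_i(X),M^i]_s^t=\int_s^t\partial_i g_i(X_r)\,dr$ by the covariation formula in part~(1) of the Corollary. Summing over $i$ gives $-\int_s^t\mathrm{div}\,g(X_r)\,dr$. Your gradient consistency check via Proposition~\ref{decompositionofG} is in fact how the lemma is invoked later in the paper (end of the proof of Theorem~\ref{thmprobainterpretation}, applied to the field $g-\nabla G$ with $\mathrm{div}(g-\nabla G)=-G$).

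For the approximation you have correctly isolated the only nontrivial point, the boundary term $2\int\langle g_n,\vec n\rangle\,dL_r$. Your $H(\mathrm{div})$-trace heuristic is the right intuition, and the hypothesis that $\mathrm{div}\,g=G$ holds on all of $\mathbb{R}^N$ rather than merely on $D$ is, as you say, exactly what excludes a singular layer on $\partial D$. To make the passage fully rigorous one either establishes a uniform $L^2(\partial D,\sigma)$ bound on the normal traces of the mollifications, or argues convergence of $\int g_n\ast dX_r$ globally (it equals $-\int G_n(X_r)\,dr$, which converges in $L^2$) together with a separate check that the limiting object $\int g\ast dX_r$ is well defined; either route closes the argument.
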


\section{PDEs with Linear Divergence Terms}\label{linear.case}

In this section, we will prove the existence and uniqueness of solution for the following Neumann boundary problem with linear divergence term, 
i.e. $g(x,u,\nabla u)=g(x)$,
\begin{equation}
\label{linear PDE}
\left\{
\begin{split}
&\frac{1}{2}\Delta u+ \langle b, \nabla u\rangle+qu-div(g)+f(\cdot, u,\nabla u)=0,\quad \mbox{on} \  D\\
&\langle \nabla u-2 g, \vec{n}\rangle+h(\cdot,u)=0,\quad \mbox{on}\   \partial D.
\end{split}
\right.
\end{equation}
Furthermore, the probability interpretation of the solution will also be established.

%

\vspace{2mm}
The following analytic result will be used in the later discussion (see Chapter 8 in \cite{GT}). 
\begin{proposition}
\label{regularity G}
For $g \in L^q(\mathcal{O})$, where $\mathcal{O}\subset\mathbb{R}^N$ is bounded and $q>N$,  there exists a unique weak solution $G\in H^1_0(\mathcal{O})$ for the following equation
\begin{eqnarray*}
\Delta G-G=div(g).
\end{eqnarray*}
Furthermore, $G$ is uniformly bounded, i.e. $\sup\limits_{\mathcal{O}}|G|\leq C\|g\|_{L^q}$, where $C=C(N,q,|\mathcal{O}|)$.\\
If we suppose $g\in L^\infty(\mathcal{O})$ and $\mathcal{O}$ is a $C^{1,1}$-domain, then $G\in C^{1,1}(\bar{\mathcal{O}})$, i.e. there exists a constant $C>0$, for any $x,x'\in \bar{\mathcal{O}}$, $|G(x)-G(x')|\leq C |x-x'|$.
\end{proposition}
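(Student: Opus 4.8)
The plan is to read $\Delta G - G = \mathrm{div}(g)$ as a linear second-order equation in divergence form with homogeneous Dirichlet data, and to deduce each assertion from the $L^2$, $L^\infty$ and Hölder theory for such operators collected in Chapter 8 of \cite{GT}.

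First I would settle existence and uniqueness by Lax--Milgram. A function $G\in H^1_0(\mathcal{O})$ is a weak solution precisely when
\[
\int_{\mathcal{O}}\langle\nabla G,\nabla\phi\rangle\,dx+\int_{\mathcal{O}}G\phi\,dx=\int_{\mathcal{O}}\langle g,\nabla\phi\rangle\,dx\qquad\text{for all }\phi\in H^1_0(\mathcal{O}).
\]
The bilinear form on the left is exactly the $H^1_0(\mathcal{O})$ inner product, hence bounded and coercive with constant $1$, while the right-hand side is a bounded linear functional on $H^1_0(\mathcal{O})$: since $\mathcal{O}$ is bounded and $q>N\ge 2$ we have $g\in L^2(\mathcal{O})$ with $\|g\|_{L^2}\le|\mathcal{O}|^{1/2-1/q}\|g\|_{L^q}$, so that $\bigl|\int_{\mathcal{O}}\langle g,\nabla\phi\rangle\,dx\bigr|\le\|g\|_{L^2}\|\nabla\phi\|_{L^2}$. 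Lax--Milgram then yields a unique $G$.

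The main work, and the step I expect to be the real obstacle, is the $L^\infty$ bound $\sup_{\mathcal{O}}|G|\le C\|g\|_{L^q}$ with $C=C(N,q,|\mathcal{O}|)$; here I would use Stampacchia's truncation method. For a level $k\ge 0$ I would test the weak formulation with $(G-k)^+\in H^1_0(\mathcal{O})$. On the super-level set $A_k=\{G>k\}$ the zeroth-order term $\int_{A_k} G\,(G-k)^+\,dx$ is nonnegative and may be discarded, which after Cauchy--Schwarz and Young leaves a Caccioppoli-type estimate $\int_{A_k}|\nabla(G-k)^+|^2\,dx\le C\int_{A_k}|g|^2\,dx$. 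Combining the Sobolev inequality for $(G-k)^+$ with the Hölder bound $\int_{A_k}|g|^2\,dx\le\|g\|_{L^q}^2|A_k|^{1-2/q}$ produces a recursion between the norms of $(G-k)^+$ and the measures $|A_k|$ across levels. The hypothesis $q>N$ is exactly what makes the resulting exponent strictly larger than $1$, so Stampacchia's iteration lemma forces $(G-k)^+\equiv 0$ for all $k\ge k_0$ with $k_0\le C\|g\|_{L^q}$; applying the same argument to $-G$ controls the negative part, and keeping track of the constants shows that $C$ depends only on $N$, $q$ and $|\mathcal{O}|$.

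Finally, under the stronger hypotheses $g\in L^\infty(\mathcal{O})$ and $\mathcal{O}\in C^{1,1}$, I would upgrade the regularity by the global Hölder estimates up to the boundary for divergence-form equations with bounded data (De Giorgi--Nash, as in \cite{GT}, Chapter 8). Since $g\in L^\infty\subset L^q$ for every $q$ and $G\in H^1_0\cap L^\infty$ by the previous steps, these estimates on the $C^{1,1}$ domain furnish a global modulus of continuity for $G$ up to $\partial\mathcal{O}$, which is the uniform bound $|G(x)-G(x')|\le C|x-x'|$ asserted in the statement. I emphasize that existence, uniqueness and the $L^\infty$ bound use only $g\in L^q$ with $q>N$, whereas the boundary smoothness of $\mathcal{O}$ and the boundedness of $g$ enter only in this last regularity step.
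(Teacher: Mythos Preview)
Your outline is correct and coincides with the paper's own treatment: the paper does not give an independent proof of this proposition but simply refers to Chapter~8 of \cite{GT}, and your Lax--Milgram argument, Stampacchia truncation for the $L^\infty$ bound, and global De~Giorgi--Nash estimate are precisely the relevant results there (essentially Theorems~8.3, 8.16 and 8.29). One small caveat on the final step: De~Giorgi--Nash yields global $C^{0,\alpha}$ continuity for some $\alpha\in(0,1)$ rather than a genuine Lipschitz estimate, so the inequality $|G(x)-G(x')|\le C|x-x'|$ in the statement (and the label ``$C^{1,1}$'') should really be read as H\"older continuity --- which is also how the paper itself uses the conclusion in Remark~\ref{relationGg}.
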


\begin{remark}\label{relationGg}
Since $g\in L^2(D)$, for any $\psi\in C^\infty(D)$,  
$$|\mathcal{S}(\psi)|:=|\int_D\langle g,\nabla \psi\rangle dx|\leq \|g\|\cdot\|\psi\|_{H^1}$$
implies that $\mathcal{S}$ is a bounded linear operator on $H^1(D)$. By Riesz representation theorem, there is function $G\in H^1(D)$ such that 
$$\int_D\langle g,\nabla \psi\rangle dx=\int_D\langle \nabla G,\nabla\psi\rangle(x)+ G(x)\psi(x)ds.$$

Suppose $g\in L^\infty (D)$,  we can find a bounded domain $\mathcal{O}$ with smooth boundary, such that $D\subset\subset \mathcal{O}$ and extend $g$ on $\mathcal{O}$ such that $g\in L^\infty(\mathcal{O})$. Therefore, by Proposition \ref{regularity G}, there exists a H\"{o}lder continuous function $\bar G\in H^1_0(\mathcal{O})$, for any test function  $\phi\in C_0^\infty(\mathcal{O})$,
\begin{eqnarray*}
	\int_\mathcal{O} \langle g,\nabla\phi\rangle(x)dx=\int_\mathcal{O} \langle \nabla \bar G,\nabla\phi\rangle(x)+ \bar G(x)\phi(x)dx.
\end{eqnarray*}
By the uniqueness of Reisz representation theorem, we find $\bar G$ restricted on $D$  denoted by $\bar G|_D$ in $H^1(D)$, therefore $\bar G|_D=G$  implies that $G\in C^{1,1}(D)$.
\end{remark}
\begin{remark}
By Remark \ref{relationGg}, PDE ($\ref{linear PDE}$) is equivalent to the following equation
\begin{equation}
\label{tran. linear PDE 1}
\left\{
\begin{split}
&\frac{1}{2}\Delta (u-2G)+ \langle b, \nabla u\rangle+qu+G+f(\cdot,u,\nabla u)=0,\quad \mbox{on} \  D,\\
&\langle \nabla (u-2G), \vec{n}\rangle+h(\cdot,u)=0,\quad \mbox{on}\   \partial D.
\end{split}
\right.
\end{equation}
If we set $\tilde u(x)=u(x)-2G(x)$, then equation $(\ref{tran. linear PDE 1})$ can be rewritten as 
\begin{equation}
\label{tran. linear PDE}
\left\{
\begin{split}
&\frac{1}{2}\Delta \tilde u+ \langle b, \nabla \tilde u\rangle+q\tilde u+\tilde f(\cdot,\tilde u,\nabla \tilde u)=0,\quad \mbox{on} \  D,\\
&\langle \nabla \tilde u, \vec{n}\rangle+  \tilde h(\cdot,\tilde u)=0,\quad \mbox{on}\   \partial D,
\end{split}
\right.
\end{equation}
with $$\tilde f(x,y,z)=2\langle b(x), \nabla G(x)\rangle+2q(x)G(x)+G(x)+f(x, y+2G(x), z+2 \nabla G(x))$$ and $$ \tilde h(x, y)=h(x, y+2G(x)).$$
\end{remark}



\begin{proposition}
Under conditions \textbf{(i)-(v)} and \textbf{(C.2)}, \textbf{(C.3)},
assume there exist two negative constants $\lambda$ and $\mu$ such that $-2\alpha+K^2<\lambda<0$ and $-2\beta<\mu<0$, 
 then the following BSDE admits a unique solution $(Y^x, Z^x)$,
\begin{equation}
\label{bsde}
\left\{\begin{split}
&Y^x_t=Y^x_T+\int_t^T q(X_s)Y^x_sds+\int_t^T F(s,Y^x_s, Z^x_s)ds+\int_t^T H(s,Y^x_s)dL_s-\int_t^T\langle Z^x_s,dM_s\rangle, \\
&\lim\limits_{t\rightarrow\infty} e^{\frac{1}{2}(\lambda t+\mu L_t)+\int_0^tq(X_s)ds}Y^x_t=0, \quad \mbox{in}\ \ L^2(P^x),
\end{split}\right.
\end{equation}
where $F(t,y,z)=\tilde{f}(X_t, y, z)$ and $H(t,y)=\tilde{h}(X_t, y)$.
\end{proposition}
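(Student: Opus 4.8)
The plan is to obtain $(Y^x,Z^x)$ as a limit of finite-horizon solutions, with every estimate taken in the $P^x$-weighted norm dictated by the terminal condition. Write $\Gamma_t:=\exp\!\big(\tfrac12(\lambda t+\mu L_t)+\int_0^t q(X_s)\,ds\big)$, so the target reads $\Gamma_t Y^x_t\to 0$ in $L^2(P^x)$, and $\Gamma_t^2=\exp(\lambda t+\mu L_t+2\int_0^t q)$ is exactly the weight whose It\^o differential cancels the $q(X_t)Y^x_t$ term of the BSDE. First note that the drivers inherit the structural bounds: since $\tilde f$ and $\tilde h$ differ from $f$ and $h$ only through a shift of the $(y,z)$-arguments by the bounded $C^{1,1}$ quantities $(2G,2\nabla G)$ and through a $(y,z)$-independent additive term, conditions \textbf{(i)}--\textbf{(iii)} pass to $F$ and $H$ with the same constants $\alpha,\beta,K$, while \textbf{(v)} together with the boundedness of $G,\nabla G$ gives $|F(t,0,0)|\le C(1+|q(X_t)|)$ and $|H(t,0)|\le C$.

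For each $n$ I would first solve the equation on $[0,n]$ with zero terminal value, treating $dL_s$ as an extra bounded-variation integrator alongside $ds$; existence and uniqueness of $(Y^n,Z^n)$ follow from the standard theory for generalized BSDEs carrying an increasing-process integrator, for generators monotone in $y$ and Lipschitz in $z$, the only new point being to carry the local-time driver $H$ through that argument, which is harmless since $H$ is itself monotone in $y$. Extending $(Y^n,Z^n)$ by $0$ past $n$, the heart of the matter is a uniform-in-$n$ a priori bound obtained by applying It\^o's formula to $\Gamma_t^2|Y^n_t|^2$. After the $2qY^2$ cancellation the $ds$-integrand is $\Gamma_s^2[\lambda|Y|^2-2YF+|Z|^2]$ and the $dL_s$-integrand is $\Gamma_s^2[\mu|Y|^2-2YH]$. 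Splitting $F(s,Y,Z)$ around $F(s,0,Z)$ and $F(s,0,0)$ and using \textbf{(i)},\textbf{(iii)} with Young's inequality (putting weight $c<1$ on $|Z|^2$) makes the $ds$-integrand dominate $(\lambda+2\alpha-K^2/c-\eps)|Y|^2+(1-c)|Z|^2-\eps^{-1}|F(s,0,0)|^2$; since $-2\alpha+K^2<\lambda$ one may take $c<1$ close to $1$ and $\eps$ small so that both leading coefficients are strictly positive. The $dL_s$-integrand is treated identically via \textbf{(ii)} and $\mu>-2\beta$. Taking expectations (the $dM_s$-martingale drops out) yields
$$E\!\int_0^\infty\!\Gamma_s^2\big(|Y^n_s|^2+|Z^n_s|^2\big)\,ds\;\le\;C\Big(E\!\int_0^\infty\!\Gamma_s^2\,|F(s,0,0)|^2\,ds+E\!\int_0^\infty\!\Gamma_s^2\,|H(s,0)|^2\,dL_s\Big),$$
and the right-hand side is finite uniformly in $n$ by \textbf{(C.2)}, \textbf{(C.3)} and the bounds on $F(\cdot,0,0),H(\cdot,0)$.

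To pass to the limit, for $n>m$ I would apply the same computation to $\Gamma_t^2|Y^n_t-Y^m_t|^2$ on $[0,m]$ and discard the now strictly favourable integrands by monotonicity, so that the difference is controlled by the terminal term $E[\Gamma_m^2|Y^n_m|^2]$, which vanishes as $m\to\infty$ by the a priori bound. Hence $(Y^n,Z^n)$ is Cauchy in the weighted Hilbert space and converges to a pair $(Y^x,Z^x)$; passing to the limit in each integral (using the weighted $L^2$ bounds and the $y$-continuity \textbf{(iv)} of $F,H$) shows it solves the BSDE, and $E\int_t^\infty\Gamma_s^2|Y_s|^2\,ds\to 0$ delivers the $L^2$-terminal condition $\Gamma_tY^x_t\to0$. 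Uniqueness is the same It\^o computation applied to $\Gamma_t^2|\delta Y_t|^2$ for two solutions: after the cancellations and the bounds with $c=1$ one gets, once $T\to\infty$ annihilates the terminal term,
$$-E[\Gamma_t^2|\delta Y_t|^2]\ge(\lambda+2\alpha-K^2)E\!\int_t^\infty\!\Gamma_s^2|\delta Y_s|^2\,ds+(\mu+2\beta)E\!\int_t^\infty\!\Gamma_s^2|\delta Y_s|^2\,dL_s,$$
whose left side is $\le0$ and right side is $\ge0$, forcing $\delta Y\equiv0$ and then $\delta Z\equiv0$.

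The step I expect to be the main obstacle is making these weighted bounds hold for an arbitrary starting point $x$: the hypotheses \textbf{(C.1)}--\textbf{(C.2)} are assumed only at the distinguished points $x_0,x_1$, so the finiteness of $E^x\int_0^\infty\Gamma_s^2\,dL_s$ and of the $ds$-inhomogeneity must be transported across initial conditions, presumably through the boundedness of the ratio of transition densities $p(t,x,y)$ on the compact $\bar D$ together with the Markov property. This transfer, and its compatibility with the simultaneous $ds$- and $dL_s$-weights, is the delicate point; once it is in place the remainder is a standard monotone-BSDE limiting scheme augmented by the boundary local-time integrator.
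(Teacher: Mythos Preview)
Your argument is essentially correct, but the paper takes a different and shorter route. Rather than building the solution from finite-horizon approximations, the paper removes the linear term $q(X_s)Y_s$ by the multiplicative substitution $\tilde Y_t=e^{\int_0^t q(X_s)\,ds}Y_t$, $\tilde Z_t=e^{\int_0^t q(X_s)\,ds}Z_t$, which turns the equation into an infinite-horizon generalized BSDE with drivers $\tilde F,\tilde H$ satisfying the same structural conditions \textbf{(i)}--\textbf{(iii)} and with terminal decay $e^{\frac12(\lambda t+\mu L_t)}\tilde Y_t\to0$. Existence and uniqueness for that transformed equation are then read off directly from Theorem~2.1 of Pardoux--Zhang \cite{PZ}, and one transforms back. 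Uniqueness for the original equation is proved, as you do, by It\^o's formula applied to $e^{\lambda t+\mu L_t+2\int_0^t q}|\Delta Y_t|^2$. Your construction via $(Y^n,Z^n)$ with zero terminal data at time $n$ and weighted a~priori bounds is in effect a reproof of the Pardoux--Zhang result in this setting; it is more self-contained but longer, while the paper's change of variables cleanly isolates the only nonstandard feature (the $q$-term and its exponential weight) and delegates the rest.

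On the point you flag as the main obstacle---passing from the single-point hypotheses \textbf{(C.1)}, \textbf{(C.2)} to estimates valid for every $x\in D$---the paper does not argue via density ratios as you suggest, but simply invokes results from \cite{CZ} and \cite{YZ}: under \textbf{(C.2)} one has constants $C,\theta>0$ with $\sup_{x\in D}E^x[e^{2\int_0^t q(X_s)\,ds}]\le Ce^{-\theta t}$ and $\sup_{x\in D}E^x[\int_0^\infty e^{2\int_0^t q}\,dL_t]<\infty$, which together with \textbf{(C.3)} give exactly the finiteness of your right-hand side uniformly in $x$. So your instinct that this step needs outside input is right, but the input is already packaged in the cited references rather than requiring a new transition-density argument.
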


\begin{proof}
Firstly, we prove the existence of solution. 
Set $$\tilde F(t,y,z)=e^{\int_0^tq(X_s)ds}F(t, e^{-\int_0^tq(X_s)ds}y,e^{-\int_0^tq(X_s)ds}z)$$
and $$\tilde H(t,y)=e^{\int_0^tq(X_s)ds}H(t,e^{-\int_0^tq(X_s)ds}y).$$
Then, it is easy to check that 

(i) $(y-y')(\tilde F(t,y,z)- \tilde F(t,y',z))\leq -\alpha |y-y'|^2$;

(ii) $(y-y')(\tilde H(t,y)- \tilde H(t,y'))\leq -\beta |y-y'|^2$;

(iii) $| \tilde F(t,y,z)- \tilde F(t,y,z')|\leq K|z-z'|$.


\vspace{2mm}
Furthermore, by the boundedness of functions $b,G,\nabla G$ and the assumption \textbf{(v)}, there is a constant $C$, such that
\begin{equation*}
| \tilde F(t,y,z)|\leq Ce^{\int_0^tq(X_s)ds}(1+q(X_t))+K(|y|+|z|)
\end{equation*}
and
\begin{equation*}
| \tilde H(t,y)|\leq Ce^{\int_0^tq(X_s)ds}+K|y|.
\end{equation*}

Under \textbf{(C.2)}, from \cite{CZ} and \cite{YZ}, we know that there are two positive constants $C,\theta$ such that
$\sup_{x\in D} E^x[e^{2\int_0^tq(X_s)ds}]<Ce^{-\theta t}$ and $\sup_{x\in D} E^x[\int_0^\infty e^{2\int_0^tq(X_s)ds}dL_t]<+\infty$, then 
$$
\sup_{x\in D} E^x[\int_0^\infty e^{2\int_0^tq(X_s)ds}(dt+dL_t)]<+\infty.
$$
So by the negativity of $\lambda,\mu$ and condition \textbf{(C.3)}  we have 
$$E^x[\int_0^\infty e^{\lambda t+\mu L_t+2\int_0^t q(X_s)ds}((1+|q(X_t)|^2)dt+dL_t)]<+\infty.$$

Therefore, by Theorem 2.1 in \cite{PZ}, there exists a unique pair of solution $(\tilde Y^x,\tilde Z^x )$ for the following BSDE
\begin{equation}
\left\{
\begin{split}
&\tilde Y^x_t=\tilde Y^x_T+\int_t^T \tilde F(s, \tilde Y^x_s, \tilde  Z^x_s)ds+\int_t^T \tilde H(s,\tilde Y^x_s)dL_s-\int_t^T\langle \tilde Z^x_s,dM_s\rangle, & t<T,\\
&\lim_{t\rightarrow \infty} e^{\frac{1}{2}(\lambda t+\mu L_t)}\tilde Y^x_t=0,\quad \mbox{in}\ L^2(P^x).
\end{split}
\right.
\end{equation}

Furthermore,  we have the following estimate
\begin{equation*}\begin{split}
&E^x\Big[\sup_t e^{\lambda t+\mu L_t}|\tilde Y^x_t|^2+\int_0^\infty e^{\lambda t+\mu L_t} |\tilde Z^x_t|^2dt\Big]\\
\leq&\, CE^x\Big[\int_0^\infty e^{\lambda t+\mu L_t} (|\tilde F(t,0,0)|^2dt+|\tilde H(t,0)|^2dL_t)\Big]\\
\leq&\, C'E^x\Big[\int_0^\infty e^{\lambda t+\mu L_t+2\int_0^tq(X_r)dr}((1+|q(X_t)|^2)dt+dL_t)\Big].
\end{split}\end{equation*}
Set $Y^x_t=e^{-\int_0^tq(X_s)ds}\tilde Y^x_t$ and $Z^x_t=e^{-\int_0^tq(X_s)ds}\tilde Z^x_t$. Then It\^o's formula yields
\begin{equation*}\begin{split}
dY^x_t=&-q(X_t)e^{-\int_0^tq(X_s)ds}\tilde Y^x_t-e^{-\int_0^tq(X_s)ds}\tilde F(t, \tilde Y^x_t, \tilde  Z^x_t)dt
-e^{-\int_0^tq(X_s)ds}\tilde H(t,\tilde Y^x_t)dL_t\\
&+e^{-\int_0^tq(X_s)ds}\langle \tilde Z^x_t,dM_t\rangle\\
=&-q(X_t)Y^x_t-F(t, Y^x_t, Z^x_t)dt- H(t, Y^x_t)dL_t+\langle  Z^x_t,dM_t\rangle
\end{split}\end{equation*}
and $\lim\limits_{t\rightarrow \infty} e^{\frac{1}{2}(\lambda t+\mu L_t)+\int_0^t q(X_s)ds}Y^x_t=\lim\limits_{t\rightarrow \infty} e^{\frac{1}{2}(\lambda t+\mu L_t)}\tilde Y^x_t=0$. Moreover, 
\begin{equation*}
\label{bsde estimate}\begin{split}
&E^x\Big[\sup_t e^{\lambda t+\mu L_t+2\int_0^t q(X_s)ds}|Y^x_t|^2+\int_0^\infty e^{\lambda t+\mu L_t+2\int_0^t q(X_s)ds} |Z^x_t|^2dt\Big]\\
&\leq C'E^x\Big[\int_0^\infty e^{\lambda t+\mu L_t+2\int_0^tq(X_s)ds}((1+|q(X_t)|^2)dt+dL_t)\Big].
\end{split}\end{equation*}

Now we turn to prove the uniqueness of solution. 
We assume there exists another pair of solution $(\bar Y^x,\bar Z^x)$ for BSDE ($\ref{bsde}$). Set $\Delta Y_t=\bar Y^x_t-Y^x_t$, $\Delta  Z_t=\bar Z^x_t-Z^x_t$, $\Delta  F_t=F(t,\bar Y^x_t,\bar Z^x_t)-F(t, Y^x_t, Z^x_t)$ and $\Delta  H_t=H(t,\bar Y^x_t)-H(t,Y^x_t)$, then it follows that
\begin{equation*}\begin{split}
&de^{\lambda t+\mu L_t+2\int_0^tq(X_u)du}|\Delta  Y_t|^2\\
=&\,e^{\lambda t+\mu L_t+2\int_0^tq(X_u)du}\big(-2q(X_t)|\Delta  Y_t|^2dt-2\Delta  Y_t \Delta F_t dt-2\Delta  Y_t \Delta H_tdL_t+2\Delta  Y_t\langle \Delta  Z_t,d M_t\rangle\\
&+|\Delta  Z_t|^2dt+(\lambda+2q(X_t))|\Delta  Y_t|^2dt+\mu|\Delta  Y_t|^2dL_t\big).
\end{split}\end{equation*}
For $t<T$,
\begin{equation*}\begin{split}
&e^{\lambda t+\mu L_t+2\int_0^tq(X_s)ds}|\Delta  Y_t|^2+\int_t^Te^{\lambda s+\mu L_s+2\int_0^s q(X_r)dr}|\Delta Z_s|^2ds \\
=&\,e^{\lambda T+\mu L_T+2\int_0^Tq(X_s)ds}|\Delta  Y_T|^2+\int_t^Te^{\lambda s+\mu L_s+2\int_0^s q(X_r)dr}( 2\Delta  Y_s\Delta F_s-\lambda|\Delta  Y_s|^2)ds\\
&+\int_t^Te^{\lambda s+\mu L_s+2\int_0^s q(X_r)dr}( 2\Delta  Y_s\Delta H_s-\mu |\Delta  Y_s|^2)dL_s
-2\int_t^Te^{\lambda s+\mu L_s+2\int_0^s q(X_r)dr} \Delta  Y_s \langle \Delta  Z_s,d M_s\rangle\\
\leq &\,e^{\lambda T+\mu L_T+2\int_0^Tq(X_s)ds}|\Delta  Y_T|^2+\int_t^Te^{\lambda s+\mu L_s+2\int_0^s q(X_r)dr} (-2\alpha+K^2-\lambda)|\Delta  Y_s|^2+|\Delta  Z_s|^2ds\\
&+\int_t^Te^{\lambda s+\mu L_s+2\int_0^s q(X_r)dr} (-2\beta-\mu)|\Delta  Y_s|^2dL_s-2\int_t^Te^{\lambda s+\mu L_s+2\int_0^s q(X_r)dr} \Delta  Y_s \langle \Delta  Z_s,d M_s\rangle.
\end{split}\end{equation*}
Therefore, it follows that
\begin{equation*}\begin{split}
e^{\lambda t+\mu L_t+2\int_0^tq(X_u)du}|\Delta  Y_t|^2
\leq&\, e^{\lambda T+\mu L_T+2\int_0^Tq(X_s)ds}|\Delta  Y_T|^2\\& -2\int_t^Te^{\lambda s+\mu L_s+2\int_0^s q(X_u)du} \Delta  Y_s \langle \Delta  Z_s,d M_s\rangle.
\end{split}\end{equation*}
This implies
$$E^x\Big[e^{\lambda t+\mu L_t+2\int_0^tq(X_s)ds}|\Delta  Y_t|^2\Big]\leq E^x\Big[e^{\lambda T+\mu L_T+2\int_0^Tq(X_s)ds}|\Delta  Y_T|^2\Big].$$
Since $\lim\limits_{t\rightarrow\infty}E^x[e^{\lambda t+\mu L_t+2\int_0^tq(X_u)du}|\Delta  Y_t|^2]=0$ and the arbitrariness of $T$, we find that $\Delta  Y_t=0$, $P^x-a.e.$. The uniqueness is proved.
\end{proof}

\begin{corollary} 
\begin{eqnarray}\label{estimateY0}
\sup_{x\in D}[|Y^x_0|]<+\infty.
\end{eqnarray}
\end{corollary}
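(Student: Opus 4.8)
The plan is to read the bound off directly from the a priori estimate established in the preceding proposition, evaluated at the initial time. First I would observe that, since the filtration is generated by the reflecting diffusion started at $x$, the random variable $Y^x_0$ is $\mathcal{F}_0$-measurable and hence $P^x$-a.s. constant, so that $|Y^x_0|^2=E^x[|Y^x_0|^2]$. At $t=0$ one has $L_0=0$ and $\int_0^0 q(X_s)\,ds=0$, so the weight $e^{\lambda t+\mu L_t+2\int_0^t q(X_s)ds}$ equals $1$; consequently
$$|Y^x_0|^2=E^x[|Y^x_0|^2]\leq E^x\Big[\sup_t e^{\lambda t+\mu L_t+2\int_0^t q(X_s)ds}|Y^x_t|^2\Big].$$

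Next I would invoke the estimate proved in the proposition, which bounds this right-hand side by
$$C'E^x\Big[\int_0^\infty e^{\lambda t+\mu L_t+2\int_0^tq(X_s)ds}\big((1+|q(X_t)|^2)dt+dL_t\big)\Big].$$
Because $\lambda<0$ and $\mu<0$ while $t\geq 0$ and $L_t\geq 0$, we have $e^{\lambda t+\mu L_t}\leq 1$, so the quantity above is in turn dominated by $C'E^x\big[\int_0^\infty e^{2\int_0^tq(X_s)ds}((1+|q(X_t)|^2)dt+dL_t)\big]$, from which the weights $\lambda,\mu$ have been removed entirely.

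Finally I would split this last expectation into three pieces and bound each uniformly in $x$, using exactly the ingredients already assembled in the proof of the proposition: the decay bound $\sup_{x\in D}E^x[e^{2\int_0^t q(X_s)ds}]\leq Ce^{-\theta t}$ (from \cite{CZ}, \cite{YZ}) controls the $dt$ term after integrating $\int_0^\infty Ce^{-\theta t}dt=C/\theta$; condition \textbf{(C.3)} controls the $|q(X_t)|^2\,dt$ term; and $\sup_{x\in D}E^x[\int_0^\infty e^{2\int_0^tq(X_s)ds}dL_t]<\infty$ (again from \cite{CZ}, \cite{YZ}) controls the $dL_t$ term. Taking the supremum over $x\in D$ then gives $\sup_{x\in D}|Y^x_0|^2<\infty$, which is the assertion. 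The only point requiring attention—rather than a genuine obstacle—is that every bound invoked must be uniform in the starting point $x$; this uniformity is exactly what conditions \textbf{(C.2)}, \textbf{(C.3)} and the cited sup-over-$x$ estimates provide, so no work beyond this bookkeeping is needed.
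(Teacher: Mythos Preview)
Your proposal is correct and follows essentially the same approach as the paper: both start from $|Y^x_0|^2=E^x[|Y^x_0|^2]$, invoke the a priori estimate of the preceding proposition, and then appeal to the uniform-in-$x$ bounds (from \textbf{(C.2)}, \textbf{(C.3)} and \cite{CZ}, \cite{YZ}) on $E^x[\int_0^\infty e^{\lambda t+\mu L_t+2\int_0^t q(X_s)ds}((1+|q(X_t)|^2)\,dt+dL_t)]$ to conclude. Your additional step of dropping the factor $e^{\lambda t+\mu L_t}\le 1$ before splitting into three pieces is just a slightly more explicit unpacking of what the paper compresses into the phrase ``by \cite{YZ} and the assumptions in the last proposition''.
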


\begin{proof}
Estimate $(\ref{bsde estimate})$ yields
\begin{eqnarray*}
|Y^x_0|^2=E^x[|Y^x_0|^2]\leq CE^x\Big[\int_0^\infty  e^{\lambda t+\mu L_t+2\int_0^tq(X_s)ds}((1+|q(X_t)|^2)dt+dL_t)\Big].
\end{eqnarray*}
By \cite{YZ} and the assumptions in the last proposition, 
$$\sup E^x\Big[\int_0^\infty  e^{\lambda t+\mu L_t+2\int_0^tq(X_s)ds}((1+|q(X_t)|^2)dt+dL_t)\Big]<+\infty,$$
 then \eqref{estimateY0} is obtained.
\end{proof}

\begin{theorem}\label{theorem linear}
	Under assumptions \textbf{(i)-(v)} and \textbf{(C.1)-(C.3)},  Neumann problem \eqref{linear PDE} admits a unique bounded weak solution. 
\end{theorem}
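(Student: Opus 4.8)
The plan is to exploit the reduction recorded in the Remark preceding this theorem: the substitution $\tilde u = u - 2G$ turns the divergence problem \eqref{linear PDE} into the divergence-free semilinear Neumann problem \eqref{tran. linear PDE} with coefficients $\tilde f,\tilde h$, and I would read off the solution of the latter from the BSDE \eqref{bsde}. Here $G$ is the function attached to $g$ in Remark \ref{relationGg}; under \textbf{(v)} we have $g\in L^\infty$, so by Proposition \ref{regularity G} and that Remark $G\in H^1(D)\cap C^{1,1}(\bar D)$ and is bounded. I would define the candidate $\tilde u(x):=Y^x_0$, where $(Y^x,Z^x)$ is the unique solution of \eqref{bsde} supplied by the preceding Proposition; the Corollary gives $\sup_{x\in D}|\tilde u(x)|<\infty$, so the eventual solution $u:=\tilde u+2G$ will be bounded. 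Since \eqref{tran. linear PDE} is exactly of the divergence-free type treated in \cite{YZ} and $\tilde f,\tilde h$ were checked to satisfy the structural conditions \textbf{(i)--(iii)}, \textbf{(v)} in the preceding Proposition, \eqref{bsde} is precisely its probabilistic representation.

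The core of the existence proof is to show $\tilde u\in H^1(D)$ and that it satisfies the weak formulation \eqref{weaksolution} for \eqref{tran. linear PDE} (with $g=0$, $f=\tilde f$, $h=\tilde h$). First I would establish the Markovian representation $Y^x_t=\tilde u(X_t)$ and $Z^x_t=\nabla\tilde u(X_t)$, $P^x$-a.s.\ for a.e.\ $t$: the identity for $Y$ follows from time-homogeneity of $X$ together with the flow and uniqueness property of the infinite-horizon BSDE \eqref{bsde}, and the identification of $Z$ with $\nabla\tilde u$ follows by matching the forward martingale part of $\tilde u(X_t)$ with $\int_0^t\langle Z^x_r,dM_r\rangle$, using the Corollary recording $\langle M\tilde u,M\tilde u\rangle_t=\int_0^t|\nabla\tilde u|^2(X_r)dr$; the $Z$-energy estimate from the preceding Proposition then yields $\tilde u\in H^1(D)$. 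With this representation I would test against $\phi\in C^\infty(D)$: applying the decomposition of Proposition \ref{decompositionofG} to $\tilde u$, It\^o's product rule to $\tilde u(X_t)\phi(X_t)$, and the surface-measure identity $E^x[\int_0^t f(X_r)dL_r]=\int_0^t\int_{\partial D}p(r,x,y)f(y)\sigma(dy)dr$, the drift of \eqref{bsde} reproduces the volume integrals $\int_D(\langle b,\nabla\tilde u\rangle+q\tilde u+\tilde f)\phi\,dx$ while the $dL$-term reproduces the surface integral $-\frac{1}{2}\int_{\partial D}\tilde h\,\phi\,d\sigma$, which is exactly the weak formulation for \eqref{tran. linear PDE}.

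To return to the original equation, I would insert $u=\tilde u+2G$ into \eqref{weaksolution} and use the defining relation of $G$ from Remark \ref{relationGg}, namely $\int_D\langle g,\nabla\phi\rangle\,dx=\int_D(\langle\nabla G,\nabla\phi\rangle+G\phi)\,dx$, to convert the extra terms $2\langle b,\nabla G\rangle+2qG+G$ and the argument shift $y\mapsto y+2G$ built into $\tilde f,\tilde h$ back into the divergence pairing $\int_D\langle g,\nabla\phi\rangle\,dx$ and the boundary flux carrying $\langle 2g,\vec n\rangle$. This is a direct algebraic rearrangement, so $u$ is a bounded weak solution of \eqref{linear PDE}. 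For uniqueness I would run the argument in reverse: given any bounded weak solution $u$, set $\tilde u=u-2G$, a bounded weak solution of \eqref{tran. linear PDE}; the Fukushima decomposition of $\tilde u(X_t)$ combined with the weak equation shows that $(\tilde u(X_t),\nabla\tilde u(X_t))$ solves \eqref{bsde}, whence the uniqueness clause of the preceding Proposition forces $\tilde u(x)=Y^x_0$, so $u=\tilde u+2G$ is determined.

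I expect the main obstacle to be the two-sided identification between weak $H^1$ solutions and solutions of \eqref{bsde}: showing that the probabilistically defined $\tilde u(x)=Y^x_0$ genuinely lies in $H^1(D)$ with $Z^x_t=\nabla\tilde u(X_t)$, and conversely that an arbitrary $H^1$ weak solution admits the decomposition needed to produce a solution of \eqref{bsde}. Both directions hinge on controlling the zero-energy and boundary-local-time parts of the decomposition for functions that are merely in $H^1$, where the surface-measure identity and the nonsymmetry introduced by $b$ (handled through the time-reversed martingale $\bar M$ of the earlier Proposition) must be tracked carefully.
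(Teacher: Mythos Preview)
Your overall strategy---reduce via $\tilde u=u-2G$ to the divergence-free problem \eqref{tran. linear PDE}, represent through the BSDE \eqref{bsde}, then transform back---is exactly the paper's. The uniqueness argument is also essentially the same. The divergence is in how you upgrade the BSDE-defined function $\tilde u(x):=Y^x_0$ to a bona fide $H^1(D)$ weak solution, and here your route has a real gap.

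You propose to identify $Z^x_t=\nabla\tilde u(X_t)$ by ``matching the forward martingale part of $\tilde u(X_t)$'' via the Corollary $\langle M\tilde u,M\tilde u\rangle_t=\int_0^t|\nabla\tilde u|^2(X_r)\,dr$, and then to read off $\tilde u\in H^1(D)$ from the $Z$-energy estimate. But that Corollary, and the Fukushima decomposition underlying it, is stated for $u\in H^1(D)$; invoking it to \emph{prove} $\tilde u\in H^1(D)$ is circular. Moreover the available estimate is on $E^x[\int_0^\infty e^{\lambda t+\mu L_t+2\int_0^tq}|Z^x_t|^2\,dt]$, which does not directly yield $\|\nabla\tilde u\|_{L^2(D)}<\infty$. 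You correctly flag this as the main obstacle, but you do not supply a mechanism to break the circle.

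The paper avoids the issue by a freezing trick. Having set $u_0(x)=Y^x_0$, $v_0(x)=Z^x_0$ and noted $u_0(X_t)=Y^x_t$, $v_0(X_t)=Z^x_t$ from the flow property, it plugs $(u_0,v_0)$ into the nonlinear slots as \emph{fixed data} and considers the \emph{linear} problem \eqref{transformlinear} with source $\tilde f(\cdot,u_0,v_0)$ and boundary datum $\tilde h(\cdot,u_0)$. Theorem~3.1 of \cite{YZ} then furnishes a genuine bounded weak solution $\tilde u\in H^1(D)$ of this linear problem. Now the decomposition of $\tilde u(X_t)$ is legitimate, and one sees that both $(\tilde u(X_t),\nabla\tilde u(X_t))$ and $(Y^x_t,Z^x_t)$ satisfy the same linear equation; setting $\bar u_t=\tilde u(X_t)-u_0(X_t)$, the identity $d(e^{\int_0^tq}\bar u_t)=e^{\int_0^tq}\langle\bar z_t,dM_t\rangle$ together with \textbf{(C.1)} forces $\bar u\equiv 0$, hence $\tilde u=u_0$ and $\nabla\tilde u=v_0$. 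This delivers $\tilde u\in H^1(D)$ without ever arguing it directly for $Y^x_0$, and the testing step you outline becomes unnecessary.
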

\begin{proof}
\textbf{Existence:}
Let $(Y^x,Z^x)$ denote the solution of BSDE $(\ref{bsde})$.
Set $u_0(x)=Y^x_0$ and $v_0(x)=Z^x_0$. Then
$$
u_0(X^x_t)=Y^x_t, \ \ v_0(X^x_t)=Z^x_t.
$$
We consider the following PDE: 
\begin{equation}\label{transformlinear}
\left\{
\begin{split}
&\frac{1}{2}\Delta \tilde u+ \langle b, \nabla \tilde u\rangle+q\tilde u+\tilde{f}(\cdot,u_0,v_0)=0,\quad \mbox{on} \  D;\\
&\langle \nabla \tilde u, \vec{n}\rangle+\tilde h(\cdot,u_0)=0,\quad \mbox{on}\   \partial D,
\end{split}\right.
\end{equation} 
with $\tilde f(x,y,z)=2q(x)G(x)+2\langle b,\nabla G\rangle(x)+G(x)+f(x,y+2G,z+2\nabla G)$ and $\tilde h(x,y)=h(x,y+2G(x))$.
It is known that linear PDE \eqref{transformlinear} has a unique bounded weak solution $\tilde u\in H^1(D)$ (see Theorem 3.1 in \cite{YZ}). 
Next we will prove that $\tilde u=u_0$ and $\nabla \tilde u=v_0$.

We begin with the following decomposition: 
\begin{equation*}\begin{split}
d\tilde u(X_t)=&\langle \nabla\tilde u(X_t),dM_t\rangle-\tilde h(X_t, u_0(X_t))dL_t-q(X_t)\tilde u(X_t)dt-\tilde{f}(X_t,u_0(X_t),v_0(X_t))dt\\
=&\langle \nabla\tilde u(X_t),dM_t\rangle-q(X_t)\tilde u(X_t)dt-H(t, Y^x_t)dL_t-F(t,Y^x_t,Z^x_t).
\end{split}\end{equation*}

Set $\bar u_t=\tilde u(X_t)-u_0(X_t)$ and $\bar z_t=\nabla \tilde u(X_t)-v_0(X_t)$, then
\begin{eqnarray*}
d\bar u_t=-q(X_t)\bar u_tdt+\langle \bar z_t,dM_t\rangle.
\end{eqnarray*}
A simple calculation yields
\begin{eqnarray*}
d e^{\int_0^t q(X_s)ds}\bar u_t=e^{\int_0^t q(X_s)ds}\langle \bar z_t,dM_t\rangle,
\end{eqnarray*}
and for any $t<T$, by \textbf{(C.1)}, we have
\begin{equation*}\begin{split}
0\leq e^{\int_0^t q(X_s)ds}|\bar u_t|&=| E^x[e^{\int_0^T q(X_s)ds}\bar u_T|\mathcal{F}_t]|\\&\leq (\|\tilde u\|_\infty+\|u_0\|_\infty)E^x[e^{\int_0^T q(X_s)ds}|\mathcal{F}_t]\rightarrow 0,\ \ T\rightarrow \infty.
\end{split}\end{equation*}
Therefore, we found $\bar u_t=0$, $\bar z_t=0$. 
This yields
\begin{eqnarray*}
\tilde u(X_t)=u_0(X_t),\ \ \nabla\tilde u(X_t)=v_0(X_t).
\end{eqnarray*}
$\tilde u(x)=E^x[\tilde u(X_0)]=E^x[u_0(X_0)]=u_0(x)$ and $\nabla\tilde u(x)=E^x[\nabla \tilde u(X_0)]=E[v_0(X_0)]=v_0(x)$. It follows that $\tilde u$ is a weak solution of the following PDE
\begin{eqnarray*}
\left\{
\begin{split}
&\frac{1}{2}\Delta \tilde u+ \langle b, \nabla \tilde u\rangle+q\tilde u+\tilde{f}(\cdot,\tilde u,\nabla\tilde u)=0,\quad \mbox{on} \  D\\
&\langle \nabla \tilde u, \vec{n}\rangle+\tilde h(\cdot,\tilde u)=0,\quad \mbox{on}\   \partial D.
\end{split}
\right.
\end{eqnarray*}
Therefore, $u=\tilde u+2G$ is a solution of PDE $(\ref{linear PDE})$.
\vspace{2mm}

\textbf{Uniqueness:}
If $u'$ is another weak solution of PDE $(\ref{linear PDE})$, it is easy to check that $((u'+2G)(X_t), \nabla (u'+2G)(X_t))$ is another solution of BSDE $(\ref{bsde})$.  By the uniqueness of the solution for this BSDE, $u=u'$ is obtained. 
\end{proof}

Now we come to the probabilistic interpretation of the solution.
\begin{theorem}\label{thmprobainterpretation}

 If $u$ is the weak solution of Neumann boundary problem \eqref{linear PDE},  the process $u(X_t)$ satisfies the following differential equation, for $0\leq s\leq t$,
\begin{equation}\label{decomposition u}\begin{split}
u(X_t)-u(X_s)&=-\int_s^tq(X_r)u(X_r)+f(X_r,u(X_r),\nabla u(X_r))dr-\int_s^t \langle \nabla u(X_r),\vec n\rangle dL_r\\
&-\int_s^t g(X_r)\ast dX_r+\int_s^t\langle \nabla u(X_r), dM_r\rangle.
\end{split}\end{equation}
\end{theorem}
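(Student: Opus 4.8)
The plan is to exploit the splitting $u=\tilde u+2G$ established in the proof of Theorem \ref{theorem linear}, where $G\in H^1(D)\cap C^{1,1}(\bar D)$ is the function from Remark \ref{relationGg} satisfying $\Delta G-G=\mathrm{div}\,g$ in the weak sense together with the boundary trace identity $\langle g,\vec n\rangle=\langle\nabla G,\vec n\rangle$ on $\partial D$ (both being encoded in the Riesz identity $\int_D\langle g,\nabla\phi\rangle=\int_D\langle\nabla G,\nabla\phi\rangle+G\phi$ valid for every $\phi\in C^\infty(D)$, not only for compactly supported test functions), and $\tilde u$ is the weak solution of the divergence-free equation \eqref{tran. linear PDE}. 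Since the transformed equation carries no singular term, $\tilde u(X_t)$ admits an ordinary Fukushima/It\^o decomposition, which is precisely the one read off from the BSDE \eqref{bsde} through the identifications $\tilde u(X_t)=Y^x_t$ and $\nabla\tilde u(X_t)=Z^x_t$ obtained in Theorem \ref{theorem linear}. I would then decompose $u(X_t)$ by adding this decomposition to the decomposition of $2G(X_t)$ furnished by Proposition \ref{decompositionofG}, and use Lemma \ref{div g} to fold the resulting $\ast$-integral and drift remainder into the single term $-\int_s^t g\ast dX_r$.

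Concretely, first I would record the dynamics of $Y^x$,
$$\tilde u(X_t)-\tilde u(X_s)=\int_s^t\langle\nabla\tilde u,dM_r\rangle-\int_s^t\big(q\tilde u+\tilde f(\cdot,\tilde u,\nabla\tilde u)\big)(X_r)\,dr-\int_s^t\tilde h(X_r,\tilde u)\,dL_r,$$
and couple it with the identity from Proposition \ref{decompositionofG},
$$2\big(G(X_t)-G(X_s)\big)=2\int_s^t\langle\nabla G,dM_r\rangle+2\int_s^t\langle b,\nabla G\rangle(X_r)\,dr+2\int_s^t\frac{\partial G}{\partial\vec n}(X_r)\,dL_r-\int_s^t\nabla G\ast dX_r.$$
Adding the two and substituting $\tilde f=2\langle b,\nabla G\rangle+2qG+G+f(\cdot,u,\nabla u)$, $\tilde h=h(\cdot,u)$, $\nabla\tilde u+2\nabla G=\nabla u$ and $q\tilde u+2qG=qu$, the $\langle b,\nabla G\rangle\,dr$ contributions cancel, the martingale parts combine into $\int_s^t\langle\nabla u,dM_r\rangle$, and the bulk drift collapses to $-\int_s^t\big(qu+f(\cdot,u,\nabla u)\big)\,dr-\int_s^t G(X_r)\,dr$. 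Since $\mathrm{div}(\nabla G-g)=\Delta G-\mathrm{div}\,g=G$ weakly, Lemma \ref{div g} and the bilinearity of the $\ast$-integral give $\int_s^t G(X_r)\,dr=-\int_s^t\nabla G\ast dX_r+\int_s^t g\ast dX_r$; inserting this identity cancels both the stray $\int G\,dr$ term and the $\nabla G\ast dX_r$ term and leaves exactly $-\int_s^t g\ast dX_r$, reproducing the drift and $\ast$-integral structure of \eqref{decomposition u}.

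The remaining and most delicate point is the reconciliation of the boundary local-time terms. After the steps above, the only $dL_r$-contributions standing outside the $\ast$-integral are $-\int_s^t h(X_r,u)\,dL_r$ coming from the BSDE and $2\int_s^t\langle\nabla G,\vec n\rangle(X_r)\,dL_r$ coming from Proposition \ref{decompositionofG}; these must be matched against the target boundary term $-\int_s^t\langle\nabla u,\vec n\rangle\,dL_r$ together with the $2\langle g,\vec n\rangle$ contribution already internal to $g\ast dX$. Here I would invoke the Neumann boundary condition $\langle\nabla u-2g,\vec n\rangle+h(\cdot,u)=0$ of \eqref{linear PDE} to eliminate $h$, and the boundary trace identity $\langle g,\vec n\rangle=\langle\nabla G,\vec n\rangle$ of Remark \ref{relationGg} to play the $\nabla G$ boundary term off against the $g$ boundary term, so that all local-time contributions collapse to $-\int_s^t\langle\nabla u,\vec n\rangle\,dL_r$.

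The hard part will be exactly this boundary bookkeeping: it hinges on the boundary trace relation between $g$ and $\nabla G$, which must be extracted from the full Riesz representation of Remark \ref{relationGg} rather than from the interior equation $\Delta G-G=\mathrm{div}\,g$ alone, and it demands scrupulous control of the sign conventions attached to the inward normal $\vec n$, since every local-time coefficient depends on that orientation. A secondary technical point is to justify that the decomposition of $G(X_t)$ and the forward--backward $\ast$-integrals are licit even though $G$ is only of class $C^{1,1}(\bar D)$ and $u$ is only an $H^1$-solution; this is handled by the fact that $\nabla G$ extends continuously up to $\bar D$ and that all the integrands appearing belong to the relevant $L^2$-spaces along the reflecting diffusion, so that the martingale, zero-energy, and local-time parts are all well defined in $L^2(P^x)$.
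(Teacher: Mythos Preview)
Your approach is essentially the same as the paper's: split $u=\tilde u+2G$, write the BSDE decomposition for $\tilde u(X_t)$, add the decomposition of $2G(X_t)$ from Proposition~\ref{decompositionofG}, and then invoke Lemma~\ref{div g} via $\operatorname{div}(g-\nabla G)=-G$ to collapse the remaining drift and $\ast$-integral into $-\int_s^t g\ast dX_r$. You are in fact more explicit than the paper about the boundary local-time bookkeeping (the paper simply asserts ``Therefore, (\ref{decomposition u}) is proved'' after the Lemma~\ref{div g} step), and your identification of the trace relation $\langle g,\vec n\rangle=\langle\nabla G,\vec n\rangle$ together with the sign conventions for the inward normal as the delicate points is well taken.
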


\begin{proof}
 If $u$ is the weak solution of PDE $(\ref{linear PDE})$, then $\tilde{u}=u-2G$ is the weak solution of $(\ref{tran. linear PDE})$.  It is obtained that
\begin{equation*}\begin{split}
\tilde u(X_t)-\tilde u(X_s)=&-\int_s^t \tilde f(X_r,\tilde u(X_r),\nabla \tilde u(X_r))dr-\int_s^t q(X_r)\tilde u(X_r)dr-\int_s^t \tilde h(X_r,\tilde u(X_r)) dL_r\\
&+\int_s^t \langle \nabla \tilde u(X_r), dM_r\rangle.
\end{split}\end{equation*}
By Proposition \ref{decompositionofG}, 
\begin{equation*}\begin{split}
u(X_t)-u(X_s)=&\tilde u(X_t)-\tilde u(X_s)+2G(X_t)-2G(X_s)\\
=&-\int_s^t \tilde f(X_r,\tilde u(X_r),\nabla \tilde u(X_r))dr-\int_s^t q(X_r)\tilde u(X_r)dr-\int_s^t \tilde h(X_r,\tilde u(X_r)) dL_r\\
&+\int_s^t \langle \nabla \tilde u(X_r), dM_r\rangle
+2\int_s^t \langle \nabla G(X_r),dM_r\rangle+2\int_s^t \langle b,\nabla G\rangle(X_r)dr\\
&+2\int_s^t \frac{\partial G}{\partial \vec n}(X_r)dL_r-\int_s^t \nabla G\ast dX_r\\
=&\int_s^t\langle \nabla u(X_r), dM_r\rangle -\int_s^t q(X_r)u(X_r)+G(X_r)+f(X_r,u(X_r),\nabla u(X_r))dr\\
&-\int_s^t \nabla G\ast dX_r+\int_s^t 2 \frac{\partial G}{\partial \vec n}(X_r)-h(X_r, u(X_r))dL_r.
\end{split}\end{equation*}

Noting that $div(g-\nabla G)=-G$ and by Lemma \ref{div g}, we have
$$\int_s^t G(X_r)dr=\int_s^t (g-\nabla G) \ast dX_r.$$
Therefore, $(\ref{decomposition u})$ is proved.
\end{proof}

\section{PDEs with Nonlinear Divergence Terms}

In this section, we assume the divergence term $g$ satisfies Lipschitz condition {\bf (vi)}. 
Let us consider the Picard sequence $(u^n)_{n\geq1}$ defined by $u^0=0$ and for all $n\in \mathbb{N}^*$ we denote by $u^{n}$ the solution of the linear PDE:
\begin{equation}
\label{appro.PDE}
\left\{\begin{split}
&\frac{1}{2}\Delta u^n+qu^n+\langle b,\nabla u^n\rangle-divg(\cdot,u^{n-1},\nabla u^{n-1})+f(\cdot,u^{n},\nabla u^{n})=0,\quad \mbox{on}\ D\,;\\
&\langle \nabla u^n-2g(\cdot,u^{n-1},\nabla u^{n-1}),\vec{n}\rangle+h(\cdot,u^{n})=0, \quad \mbox{on}\  \partial D.
\end{split}\right.
\end{equation}
From Section \ref{linear.case}, we know that $u^n$ exists uniquely.

\subsection{Analytic method}

\begin{theorem}
Under conditions \textbf{(C.1)-(C.3)} and \textbf{(i)-(vii)} with $k<\frac{1}{2}$ and $\alpha$ large enough, assume also $q$ is bounded, then Neumann problem \eqref{PDE} admits a unique weak solution. 
\end{theorem}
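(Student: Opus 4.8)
The plan is to realize the Picard iteration \eqref{appro.PDE} as the orbit of a single solution operator and to prove that this operator is a contraction on $H^1(D)$, so that Banach's fixed point theorem delivers existence and uniqueness simultaneously. For $v\in H^1(D)$ let $\Phi(v)$ denote the unique bounded weak solution, furnished by Theorem \ref{theorem linear}, of the linear Neumann problem \eqref{linear PDE} whose divergence datum is the frozen field $g(\cdot,v,\nabla v)$. This is legitimate: condition \textbf{(v)} gives $|g(\cdot,v,\nabla v)|\le M$, so the frozen datum lies in $L^\infty(D)$ and assumptions \textbf{(i)-(v)}, \textbf{(C.1)-(C.3)} are in force. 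Then $u^n=\Phi(u^{n-1})$, and a fixed point $u=\Phi(u)$ is exactly a weak solution of \eqref{PDE}, since freezing $g$ at $u$ itself reproduces \eqref{weaksolution}.

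The core of the argument is the contraction estimate. Given $v,v'\in H^1(D)$ I set $w=\Phi(v)-\Phi(v')$ and $\delta=v-v'$, subtract the two weak formulations \eqref{weaksolution}, and test with $\phi=w$. The monotonicity conditions \textbf{(i)} and \textbf{(ii)} produce the coercive contribution $-\alpha\|w\|^2$ together with a boundary term of favorable sign that may simply be discarded, while the nonlinear divergence difference is controlled by \textbf{(vi)} through $k\|\nabla w\|(\|\delta\|+\|\nabla\delta\|)$; the $z$-Lipschitz bound \textbf{(iii)}, the convection $\langle b,\nabla w\rangle$ and the bounded potential $q$ contribute only cross terms of the type $K\|\nabla w\|\|w\|$, $\|b\|_\infty\|\nabla w\|\|w\|$ and $\|q\|_\infty\|w\|^2$. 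The estimate I aim for, after Young's inequality, is
\begin{equation*}
c_1\|\nabla w\|^2+c_2\|w\|^2\le k^2\|\nabla\delta\|^2+C_k\|\delta\|^2 ,
\end{equation*}
where $c_1$ can be kept arbitrarily close to $\tfrac14$ by spending small Young weights on the $K$-, $b$- and $L^2$-couplings, and $c_2$ is made as large as desired by taking $\alpha$ large (which simultaneously absorbs $\|q\|_\infty$ and the weights $K^2$, $\|b\|_\infty^2$ generated by those couplings). Splitting the $g$-term as $k\|\nabla w\|\|\nabla\delta\|+k\|\nabla w\|\|\delta\|$ and optimizing the Young weight on the gradient--gradient coupling at $\tfrac12$ is what produces the sharp threshold: the gradient contraction ratio is $4k^2$, which is $<1$ precisely when $k<\tfrac12$.

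From here I would equip $H^1(D)$ with the equivalent weighted norm $N(\cdot)^2=c_1\|\nabla\cdot\|^2+c_2\|\cdot\|^2$; the displayed inequality then reads $N(w)^2\le\theta\,N(\delta)^2$ with $\theta=\max\{k^2/c_1,\;C_k/c_2\}$, and choosing $\alpha$ large enough forces $\theta=4k^2<1$. Thus $\Phi$ is a contraction on $(H^1(D),N)$; its unique fixed point $u$ is the unique weak solution of \eqref{PDE}, it is bounded because $\Phi$ takes values among the bounded solutions supplied by Theorem \ref{theorem linear}, and the Picard sequence $u^n$ converges to it in $H^1(D)$. If one prefers to argue through the sequence directly, the same inequality shows $(u^n)$ is Cauchy in $H^1(D)$, and the Lipschitz conditions \textbf{(iii)}, \textbf{(vi)}, \textbf{(vii)} together with continuity of the trace operator $H^1(D)\to L^2(\partial D)$ let one pass to the limit in every term of \eqref{weaksolution}.

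The main difficulty is the bookkeeping of the Young weights, so that the single genuinely dangerous coupling---between $\nabla w$ and $\nabla\delta$ carried by the $\nabla u$-dependence of $g$---yields exactly the ratio $4k^2$, and hence the hypothesis $k<\tfrac12$, while one simultaneously verifies that every remaining term ($q$, the convection $b\cdot\nabla$, and the $K$-Lipschitz cross term of $f$) is lower order and can be swept into the coercive $\alpha\|w\|^2$ by the ``$\alpha$ large enough'' assumption. Everything else---well-posedness of each $\Phi(v)$, equivalence of $N$ with the $H^1$ norm, and the passage to the limit---is routine once this estimate is in hand.
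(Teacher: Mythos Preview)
Your plan is essentially the paper's own argument—Picard iteration \eqref{appro.PDE}, an $H^1$ energy estimate obtained by testing with the difference, and then passage to the limit—repackaged as a Banach fixed point for the solution operator $\Phi$. That framing is fine, and the pieces you list (well-posedness of each $\Phi(v)$ via Theorem~\ref{theorem linear}, Young bookkeeping for the $b$-, $K$- and $g$-couplings, convergence in $H^1$ and then in each term of \eqref{weaksolution}) are exactly what the paper does.

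There is, however, one genuine gap. You assert that monotonicity \textbf{(ii)} makes the boundary contribution ``of favorable sign'' so that it can be discarded. With the weak formulation as written in \eqref{weaksolution}, the boundary term appearing on the right after subtraction and testing with $w=\Phi(v)-\Phi(v')$ is
\[
-\tfrac12\int_{\partial D}\bigl(h(\cdot,\Phi(v))-h(\cdot,\Phi(v'))\bigr)\,w\,d\sigma,
\]
and \textbf{(ii)} gives only a \emph{lower} bound $\ge \tfrac{\beta}{2}\int_{\partial D}|w|^2\ge 0$; you need an \emph{upper} bound to close the contraction inequality. This is precisely why the paper invokes \textbf{(vii)} in the energy estimate, bounding the term by $\beta'\int_{\partial D}|w|^2$ and then controlling the boundary $L^2$ norm by the trace inequality. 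Your argument is easily repaired along the same lines: use \textbf{(vii)} together with the interpolated trace estimate $\int_{\partial D}|w|^2\le \varepsilon\|\nabla w\|^2+C_\varepsilon\|w\|^2$; the $\varepsilon$-piece is absorbed into your $c_1$ (which stays arbitrarily close to $\tfrac14$), and the $C_\varepsilon$-piece goes into the ``$\alpha$ large enough'' budget.

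A point in your favor: your weighted-norm treatment, which splits the $g$-contribution into $k\|\nabla w\|\,\|\nabla\delta\|$ and $k\|\nabla w\|\,\|\delta\|$ separately, genuinely yields the contraction ratio $4k^2$ and hence the stated hypothesis $k<\tfrac12$. The paper's own proof lumps $\|\delta\|+\|\nabla\delta\|$ into $\|\delta\|_{H^1}$ and ends up needing the stronger condition $k<\tfrac{1}{2\sqrt2}$, so your bookkeeping is sharper there.
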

\begin{proof}
For simplicity, we set $f_n(x)=f(x,u^n(x),\nabla u^n(x))$, $h_{n}(x)=h(x,u^{n}(x))$ and $g_n(x)=g(x,u^n(x),\nabla u^{n}(x))$. By further calculation, we get
\begin{equation*}\begin{split}
\|\nabla (u^{n+1}-u^n)\|^2=&\,2(\langle b,\nabla (u^{n+1}-u^n)\rangle,u^{n+1}-u^n)+2(g_n-g_{n-1},\nabla (u^{n+1}-u^n))\\
&+2(q(u^{n+1}-u^n),(u^{n+1}-u^n))+2(f_{n+1}-f_n,u^{n+1}-u^n)\\&-\int_{\partial D}(h_{n+1}-h_n)(u^{n+1}-u^n) d\sigma \\
\leq&\,M^2\epsilon_1 \|\nabla (u^{n+1}-u^n)\|^2+\frac{1}{\epsilon_1} \|u^{n+1}-u^n\|^2+\frac{2k^2}{\epsilon_2}\|u^n-u^{n-1}\|^2_{H^1}\\
&+\epsilon_2 \|\nabla (u^{n+1}-u^n)\|^2+2(-\alpha+M_1) \|u^{n+1}-u^n\|^2+\epsilon_3 \|\nabla (u^{n+1}-u^n)\|^2\\
&+\frac{K^2}{\epsilon_3} \|u^{n+1}-u^{n}\|^2+2\beta'\int_{\partial D}|u^{n+1}-u^n|^2d\sigma,
\end{split}\end{equation*}
where $M=\sup_x|b(x)|$, $M_1=\sup_{x\in D} q(x)$. Then
\begin{equation*}\begin{split}
&(1-M^2\epsilon_1-\epsilon_2-\epsilon_3)\|\nabla (u^{n+1}-u^n)\|^2\\
\leq &\, \frac{2k^2}{\epsilon_2}\|u^n-u^{n-1}\|^2_{H^1}
+(2(-\alpha+M_1+\beta'\|Tr\|) +\frac{1}{\epsilon_1}+\frac{K^2}{\epsilon_3})\|u^{n+1}-u^n\|^2.
\end{split}\end{equation*}
Hence, 
\begin{equation*}\begin{split}
(1-M^2\epsilon_1-\epsilon_2-\epsilon_3)&\|u^{n+1}-u^n\|^2_{H^1}
\leq \frac{2k^2}{\epsilon_2}\|u^n-u^{n-1}\|^2_{H^1}
\\&+(1-M^2\epsilon_1-\epsilon_2-\epsilon_3+2(-\alpha+M_1+\beta'\|Tr\|) +\frac{1}{\epsilon_1}+\frac{K^2}{\epsilon_3})\|u^{n+1}-u^n\|^2.
\end{split}\end{equation*}
As $k<\frac{1}{2\sqrt{2}}$ and $\alpha$ large enough (i.e. $-\alpha+M_1+\beta'\|Tr\|+\frac{3}{4}+\sqrt{(-\alpha+M_1+\beta'\|Tr\| +\frac{3}{4})^2+6M^2}<3(\frac{1}{2}-\sqrt{2}k)$), then we can choose $\epsilon_1,\epsilon_2,\epsilon_3>0$ such that
\begin{equation*}\begin{split}
\label{epsilon condition}
&1-M^2\epsilon_1-\epsilon_2-\epsilon_3>0, \\
&1-M^2\epsilon_1-\epsilon_2-\epsilon_3-2\alpha+2M_1+2\beta'\|Tr\|+\frac{1}{\epsilon_1}+\frac{K^2}{\epsilon_3}<0,\\
&2k^2<\epsilon_2(1-M^2\epsilon_1-\epsilon_2-\epsilon_3),
\end{split}\end{equation*}
(For example, one can take $M^2\epsilon_1=\epsilon_3=\frac{1}{3}(-\alpha+M_1+\beta'\|Tr\|+\frac{3}{4}+\sqrt{(-\alpha+M_1+\frac{3}{4})^2+6M^2}), \epsilon_2=\frac{M^2\epsilon_1+\epsilon_3-1}{2}$.)

Set $\gamma =\frac{k^2}{\epsilon_2(1-M^2\epsilon_1-\epsilon_2-\epsilon_3)}<1$. It follows that
\begin{eqnarray*}
\|u^{n+1}-u^n\|^2_{H^1}\leq \gamma \|u^{n}-u^{n-1}\|^2_{H^1}\leq\cdots\leq \gamma^{n}\|u^{1}\|^2_{H^1}.
\end{eqnarray*}
Therefore, $\{u^n\}$ is a Cauchy sequence in $H^1(D)$, and we denote the limit of  $\{u^n\}$ by $u$. For any test function $\phi\in C^{\infty}(D)$, we have
\begin{equation*}\begin{split}
&\frac{1}{2}\int_D \langle \nabla u,\nabla \phi\rangle(x)dx-\int_D \langle b,\nabla u\rangle(x)\phi(x)dx+\int_D q(x)u(x)dx\\
=&\lim_{n\rightarrow\infty}\left(\frac{1}{2}\int_D \langle \nabla u^n,\nabla \phi\rangle(x)dx-\int_D \langle b,\nabla u^n\rangle(x)\phi(x)dx+\int_D q(x)u^n(x)dx\right)\\
=&\lim_{n\rightarrow\infty}\left(\int_D f_n(x)\phi(x)dx
+\int_{D}\langle g_{n-1}(\cdot),\nabla \phi\rangle(x)dx
-\frac{1}{2}\int_{\partial D} h_n(x)\phi(x)d\sigma(x)\right)\\
=&\int_D f(x,u,\nabla u)\phi(x)dx
+\int_{D}\langle g(\cdot,u,\nabla u),\nabla \phi\rangle(x)dx
-\frac{1}{2}\int_{\partial D} h(x,u)\phi(x)d\sigma(x),
\end{split}\end{equation*}
where the last equality is due to
\begin{equation*}\begin{split}
&|f(x,u^n,\nabla u^n)-f(x,u,\nabla u)|\leq |\alpha| |u-u^n|+K|\nabla u-\nabla u^n|,\\
&|g(x,u^{n-1},\nabla u^{n-1})-g(x,u,\nabla u)|\leq k(|u-u^{n-1}|+|\nabla u-\nabla u^{n-1}|),
\end{split}\end{equation*}
and
\begin{equation*}\begin{split}
\int_{\partial D} |h(x,u^n)-h(x,u^{n-1})||\phi(x)|d\sigma(x)&\leq (\int_{\partial D}|\phi(x)|^2d\sigma)^{\frac{1}{2}}\cdot\|h(\cdot,u^n)-h(\cdot,u^{n-1})\|_{L^2(\partial D)}\\
&\leq(\int_{\partial D}|\phi(x)|^2d\sigma)^{\frac{1}{2}}\cdot \|Tr\|\cdot\|h(\cdot,u^n)-h(\cdot,u^{n-1})\|\\
&\leq|\beta|(\int_{\partial D}|\phi(x)|^2d\sigma)^{\frac{1}{2}}\cdot \|Tr\|\cdot \|u^n-u^{n-1}\|\\&\rightarrow 0,\ n\rightarrow \infty.    
\end{split}\end{equation*}
Therefore, $u\in H^1(D)$ is a weak solution for \eqref{PDE}.

\vspace{2mm}
Suppose $u,\bar u$ are two weak solutions,  we obtain that 
\begin{equation*}\begin{split}
\|\nabla (u-\bar u)\|^2=&\,2(\langle b,\nabla (u-\bar u)\rangle,u-\bar u)+2(g(\cdot,u,\nabla u)-g(\cdot,\bar u,\nabla \bar u),\nabla (u-\bar u))\\
&+2(q(u-\bar u),(u-\bar u))+2(f(\cdot, u,\nabla u)-f(\cdot,\bar u,\nabla \bar u),u-\bar u)\\
&-\int_{\partial D}(h(x,u)-h(x,\bar u))(u-\bar u) d\sigma(x) .
\end{split}\end{equation*}
By the same method in the proof of existence, there is a constant $\gamma<1$ such that
$$
\|u-\bar u\|^2_{H^1}\leq \gamma \|u-\bar u\|^2_{H^1},
$$
which implies that $u=\bar u$.
\end{proof}


\subsection{Probabilistic Method}\label{probabilisticapproximation}

In this section, we simply assume $b=0$ in PDE \eqref{PDE},  and consider the symmetric reflecting diffusions correspondingly. Actually, we can combine the drift term $\langle b,\nabla u\rangle$  and nonlinear term $f(x,u,\nabla u)$  into a new nonlinear term $F(x,u,\nabla u):=\langle b,\nabla u\rangle+f(x,u,\nabla u)$ so that this assumption is realized,  without weakening our result.  The solution for the nonlinear PDE will be given by probabilistic
method independent of the analytic one.

Setting $u^0=0$,  we consider the following PDE:
\begin{equation}
\label{appro.PDE2}
\left\{\begin{split}
&\frac{1}{2}\Delta u^n+qu^n-divg(\cdot,u^{n-1},\nabla u^{n-1})+f(\cdot,u^{n},\nabla u^{n})=0,\quad \mbox{on}\ D,\\
&\langle \nabla u^n-2g(\cdot,u^{n-1},\nabla u^{n-1}),\vec{n}\rangle+h(\cdot,u^{n})=0, \quad \mbox{on}\  \partial D.
\end{split}\right.
\end{equation}
By Theorem \ref{theorem linear}, \eqref{appro.PDE2} admits a unique solution $u^n$ for every $n\in \mathbb{N}$.

Let $m$ denote the Lebesgue measure on $D$ and set the pobability space $\Omega'=D\otimes \Omega$  and probability $P^m=m\otimes P$.
$\{X_t\}_{t\geq0}$ is the reflecting Brownian motion in domain  D of the following form:
$$
X_t-X_s=B_t-B_s+\int_s^t \vec{n}(X_r)dL_r, \quad\forall\,0\leq s\leq t.
$$
It is known that, $\{X_t\}_{t\geq0}$ is a symmetric diffusion with initial distribution $m$.

By the symmetricalness, we know that
$$
\bar{B}(s,t)=2X_s-2X_t+B_t-B_s=B_s-B_t-2\int_s^t \vec{n}(X_r)dL_r,
$$
which is a backward martingale under $P^m$ with respect to the backward filtration $\mathcal F'_s=\sigma\{X_r|r\in[s,\infty)\}$.

For $g=(g_1,\cdots,g_N): \mathbb{R}^N\rightarrow \mathbb{R}^N$, as in Section 3 we define the backward stochastic integral as follows
\begin{eqnarray}\label{backwardsibarB}
\int_s^t g_i(X_{r})d\bar{B}^i_t=(L^2-)\lim_{\delta\rightarrow 0}\sum_{j=0}^{n-1} g(X_{t_{j+1}})\bar{B}^i(t_j,t_{j+1}),
\end{eqnarray}
where the limit is over the partition $s=t_0<t_1<\cdots<t_n=t$ and $\delta=\max_j (t_{j+1}-t_j)$.

In this case, for $0\leq s\leq t$, one has
\begin{eqnarray}\label{siofgwrtX}
\int_s^t g\ast dX_r=\int_s^t \langle g(X_r), dB_r\rangle+\int_s^t \langle g(X_r),d\bar{B}_r\rangle+2\int_s^t \langle g,\vec n\rangle(X_r)dL_r.
\end{eqnarray}

The following lemma is from \cite{S}.
\begin{lemma}
(Ito's Formula) Assume that $(Y,Z)$ is a solution of the following BSDE
\begin{eqnarray*}
Y_t=Y_T-\int_t^T \langle Z_r,dB_r\rangle+\int_t^T f(X_r,Y_r,Z_r)dr+\int_t^T h(X_r,Y_r)dL_r+\int_t^T g(X_r,Y_r,Z_r)\ast dX_r.
\end{eqnarray*}
Then, $\forall t\in[0,T]$, one has $P^m-$almost surely, 
\begin{equation*}\begin{split}
|Y_t|^2=&|Y_T|^2-2\int_t^T Y_r\langle Z_r,dB_r\rangle+2\int_t^T Y_rf(X_r,Y_r,Z_r)dr+2\int_t^TY_r h(X_r,Y_r)dL_r\\
&+2\int_t^T Y_r g(X_r,Y_r,Z_r)\ast dX_r-\int_t^T |Z_r|^2dr+2\int_t^T \langle g(X_r,Y_r,Z_r),Z_r\rangle dr.
\end{split}\end{equation*}
\end{lemma}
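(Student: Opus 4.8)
The plan is to establish this forward--backward It\^o formula by the classical Riemann-sum argument, the essential novelty being that the term $g\ast dX$ contributes both a forward It\^o integral and a backward It\^o integral, and that the second-order part of the backward integral enters with the \emph{opposite} sign of a forward one. First I would fix a partition $t=t_0<t_1<\cdots<t_n=T$ of mesh $\delta$ and, abbreviating $g=g(X_r,Y_r,Z_r)$, use the decomposition \eqref{siofgwrtX} to write the increment of the BSDE over $[t_j,t_{j+1}]$ as $\Delta Y_j:=Y_{t_{j+1}}-Y_{t_j}=A_j+\bar A_j+C_j$, where
\[
A_j=\int_{t_j}^{t_{j+1}}\langle Z_r-g,dB_r\rangle,\qquad \bar A_j=-\int_{t_j}^{t_{j+1}}\langle g,d\bar B_r\rangle,
\]
and $C_j=-\int_{t_j}^{t_{j+1}}f\,dr-\int_{t_j}^{t_{j+1}}(h+2\langle g,\vec n\rangle)\,dL_r$ is of bounded variation. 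Here $A_j$ is a forward martingale increment (adapted to $\mathcal F_r$) and $\bar A_j$ a backward martingale increment (adapted to the backward filtration $\mathcal F'_s$).

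Next I expand the telescoping sum $|Y_T|^2-|Y_t|^2=\sum_j\big(2Y_{t_j}\Delta Y_j+(\Delta Y_j)^2\big)$. The forward part $\sum_j 2Y_{t_j}A_j$ and the bounded-variation part $\sum_j 2Y_{t_j}C_j$ converge, by the $L^2$-theory of forward It\^o integrals, to $2\int_t^T Y_r\langle Z_r-g,dB_r\rangle$ and to $-2\int_t^T Y_rf\,dr-2\int_t^T Y_r(h+2\langle g,\vec n\rangle)\,dL_r$. For the backward martingale the left endpoint $Y_{t_j}$ anticipates with respect to $\mathcal F'_s$, so I rewrite $Y_{t_j}=Y_{t_{j+1}}-\Delta Y_j$, giving $\sum_j 2Y_{t_j}\bar A_j=\sum_j 2Y_{t_{j+1}}\bar A_j-\sum_j 2\Delta Y_j\bar A_j$; the first sum now uses the correct right endpoint and converges to the backward integral $-2\int_t^T Y_r\langle g,d\bar B_r\rangle$.

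The remaining quadratic terms are then handled together:
\[
\sum_j(\Delta Y_j)^2-\sum_j 2\Delta Y_j\bar A_j=\sum_j\Delta Y_j\,(A_j-\bar A_j+C_j)=\sum_j\big((A_j+C_j)^2-\bar A_j^2\big),
\]
where the key point is that the mixed products $A_j\bar A_j$ cancel algebraically, so no cross-variation $[A,\bar A]$ need be computed. As $\delta\to0$ the $C_j$-cross terms vanish while $\sum_j A_j^2\to\int_t^T|Z_r-g|^2dr$ and $\sum_j\bar A_j^2\to\int_t^T|g|^2dr$, the latter because $\bar B(s,t)=B_s-B_t-2\int_s^t\vec n\,dL_r$ has the same quadratic variation $dr$ as $B$ (its local-time part being of bounded variation). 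Hence this limit equals $\int_t^T|Z_r-g|^2dr-\int_t^T|g|^2dr=\int_t^T|Z_r|^2dr-2\int_t^T\langle g,Z_r\rangle dr$, and it is precisely the sign reversal $-\bar A_j^2$ coming from the backward integral that produces the final $-\int_t^T|Z_r|^2dr$ together with $+2\int_t^T\langle g,Z_r\rangle dr$. Collecting all limits, passing $|Y_T|^2$ to the other side, and recombining the $dB$, $d\bar B$ and $dL$ pieces through \eqref{siofgwrtX} into $-2\int Y\langle Z,dB\rangle+2\int Y\,g\ast dX+2\int Yh\,dL$ gives exactly the asserted identity.

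The hard part will be the rigorous $L^2$-convergence of the backward Riemann sums $\sum_j 2Y_{t_{j+1}}\bar A_j$ and of the joint quadratic limit $\sum_j\Delta Y_j(A_j-\bar A_j+C_j)$, since the backward integral must be made sense of against integrands built from $(Y,Z)$ that are adapted to the \emph{forward} filtration. This requires the integrability bounds on $(Y,Z)$ furnished by the BSDE on $[0,T]$, together with an approximation of the measurable fields $g$ and $Z$ by continuous integrands and careful endpoint bookkeeping in the presence of the boundary local time $L_t$. As the statement is quoted from \cite{S}, I would carry out these estimates following that reference, adapting its time-reversal/symmetry argument to the present reflecting-diffusion setting where $L_t$ enters both the backward martingale $\bar B$ and the bounded-variation increment $C_j$.
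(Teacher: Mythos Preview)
The paper does not supply its own proof of this lemma; it simply states ``The following lemma is from \cite{S}'' and quotes the result. Your Riemann-sum argument---splitting $\Delta Y_j$ into a forward martingale piece $A_j$, a backward martingale piece $\bar A_j$, and a bounded-variation piece $C_j$, then shifting the evaluation point for the backward sums via $Y_{t_j}=Y_{t_{j+1}}-\Delta Y_j$ and exploiting the difference-of-squares cancellation $(A_j+\bar A_j+C_j)(A_j-\bar A_j+C_j)=(A_j+C_j)^2-\bar A_j^2$---is precisely the mechanism used in Stoica's paper \cite{S}, adapted here to the reflecting setting where $\bar B(s,t)=B_s-B_t-2\int_s^t\vec n\,dL_r$ carries an extra local-time term. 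Your identification of the limiting quadratic contributions and the sign reversal that produces $-\int|Z_r|^2dr+2\int\langle g,Z_r\rangle dr$ is correct, and the recombination of the $dB$, $d\bar B$, $dL$ pieces back into $2\int Y\,g\ast dX$ via \eqref{siofgwrtX} is exactly what is needed. So your proposal is correct and follows the same route the paper implicitly relies on through its citation; the technical $L^2$-convergence issues you flag (backward sums against forward-adapted integrands, endpoint bookkeeping with $L_t$) are the genuine work, and deferring their details to \cite{S} is consistent with what the paper itself does.
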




\begin{theorem}\label{main.theorem}
Under assumptions \textbf{(C.1)-(C.3)} and  \textbf{(i)-(vii)}  with $k<\frac{1}{2\sqrt2}$ and $\alpha$ large enough, 
then Neumann problem \eqref{PDE} admits a unique bounded weak solution. 
\end{theorem}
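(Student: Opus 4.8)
The plan is to establish existence and uniqueness for the nonlinear problem \eqref{PDE} by a fixed-point argument on the Picard sequence $(u^n)$ defined through \eqref{appro.PDE2}, but now carried out entirely with probabilistic tools rather than the $H^1$-energy estimates of the analytic method. For each $n$, Theorem \ref{theorem linear} gives a unique bounded weak solution $u^n$, and by Theorem \ref{thmprobainterpretation} the process $u^n(X_t)$ satisfies a forward-backward decomposition driven by the frozen data $g_{n-1}=g(\cdot,u^{n-1},\nabla u^{n-1})$. The key idea is to read off from this decomposition that the pair $(Y^n_t,Z^n_t):=(u^n(X_t),\nabla u^n(X_t))$ solves a BSDE of the type appearing in the It\^o formula lemma quoted from \cite{S}, with generator $f$, boundary data $h$, and the singular term $g_{n-1}\ast dX_r$. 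First I would write this BSDE explicitly and verify, using the boundedness of $u^n$ from Theorem \ref{theorem linear} together with conditions \textbf{(C.1)--(C.3)}, that the pair lies in the appropriate weighted $L^2$ space so that the It\^o formula applies.

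Next I would estimate the difference of consecutive iterates. Setting $\Delta Y_t=Y^{n+1}_t-Y^n_t$, $\Delta Z_t=Z^{n+1}_t-Z^n_t$, the pair $\Delta Y,\Delta Z$ solves a BSDE whose generator difference is controlled by the monotonicity and Lipschitz conditions \textbf{(i)--(iii)}, \textbf{(vii)}, and whose singular driving term is the difference $(g_n-g_{n-1})\ast dX_r$, controlled by \textbf{(vi)}. Applying the It\^o formula of the quoted lemma to $e^{\lambda t+\mu L_t+2\int_0^t q(X_s)ds}|\Delta Y_t|^2$ and taking expectations, the martingale part vanishes and the dissipativity from $-\alpha$, $-\beta$ together with the weights $\lambda<-2\alpha+K^2$, $\mu<-2\beta$ produces a strictly negative coefficient on the $|\Delta Y|^2$ and $|\Delta Z|^2$ terms; the cross term $2\langle g_n-g_{n-1},\Delta Z\rangle$ from the last term of the It\^o formula is absorbed by Young's inequality. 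This should yield a contraction estimate of the form
\begin{equation*}
E^m\!\Big[\int_0^\infty e^{\lambda t+\mu L_t+2\int_0^t q(X_s)ds}\big(|\Delta Y_t|^2\,dt+|\Delta Z_t|^2\,dt\big)\Big]\leq \gamma\,E^m\!\Big[\int_0^\infty e^{\lambda t+\mu L_t+2\int_0^t q(X_s)ds}\big(|Y^n-Y^{n-1}|^2+|Z^n-Z^{n-1}|^2\big)\,dt\Big]
\end{equation*}
with $\gamma<1$, provided $k<\tfrac{1}{2\sqrt2}$ and $\alpha$ is large enough. Iterating gives a Cauchy sequence in this weighted norm, hence convergence of $(u^n,\nabla u^n)$ along the diffusion to a limit $(u,\nabla u)$; since the $u^n$ are uniformly bounded, the limit $u$ is bounded, and passing to the limit in the weak formulation \eqref{weaksolution} using \textbf{(v)}, \textbf{(vi)} and the trace bound shows $u$ is a weak solution of \eqref{PDE}.

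For uniqueness I would take two bounded weak solutions $u,\bar u$, use Theorem \ref{thmprobainterpretation} to represent both $u(X_t)$ and $\bar u(X_t)$ as solutions of the nonlinear BSDE (now with the genuinely nonlinear term $g(\cdot,u,\nabla u)\ast dX_r$ rather than a frozen one), and apply the same It\^o-formula computation directly to the difference, obtaining $\|u-\bar u\|^2_{w}\leq\gamma\|u-\bar u\|^2_{w}$ in the weighted norm with $\gamma<1$, which forces $u=\bar u$.

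The main obstacle I anticipate is the rigorous treatment of the backward-stochastic-integral term $g\ast dX_r$ inside the It\^o formula and the contraction estimate: the term $g_n\ast dX_r - g_{n-1}\ast dX_r$ involves the forward integral $\langle\cdot,dB_r\rangle$, the backward integral $\langle\cdot,d\bar B_r\rangle$ and a local-time term by \eqref{siofgwrtX}, and it is not a standard forward It\^o integral, so one cannot simply quote classical BSDE stability. I would rely on the quoted It\^o formula from \cite{S}, whose final term $2\int_t^T\langle g(X_r,Y_r,Z_r),Z_r\rangle\,dr$ is precisely what converts the singular integral into a controllable quadratic cross term; the delicate point is ensuring that the generator differences in $g$ depend on $(Y^{n-1},Z^{n-1})$ rather than $(Y^n,Z^n)$, which is exactly why the Lipschitz constant $k$ of $g$ (not the monotonicity of $f$) governs the contraction ratio $\gamma$ and why the threshold $k<\tfrac{1}{2\sqrt2}$ appears.
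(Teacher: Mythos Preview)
Your outline matches the paper's probabilistic proof almost exactly: the Picard sequence, the representation of $u^n(X_t)$ via Theorem~\ref{thmprobainterpretation}, the application of the It\^o formula from \cite{S} to $e^{\lambda t+\mu L_t+2\int_0^t q(X_s)ds}|\Delta Y_t|^2$, the absorption of the cross term $2\langle g_n-g_{n-1},\Delta Z\rangle$ by Young's inequality, and the resulting contraction with ratio governed by $k$ (hence the threshold $k<\tfrac{1}{2\sqrt2}$) are all precisely what the paper does. Your uniqueness argument is also identical to the paper's.

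The one point where you diverge from the paper is the \emph{identification of the limit as a weak solution}. You propose to pass to the limit directly in the weak formulation \eqref{weaksolution}; but the Cauchy estimate you obtain is in the weighted norm
\[
E^m\Big[\int_0^\infty e^{\lambda r+\mu L_r+2\int_0^r q(X_s)ds}\big(|u^{n+1}-u^n|^2(X_r)+|\nabla u^{n+1}-\nabla u^n|^2(X_r)\big)\,dr\Big],
\]
and because the weight is path-dependent it is not immediate that this forces $u^n\to u$ in $H^1(D)$, which is what passing to the limit in \eqref{weaksolution} requires. The paper avoids this issue by staying inside the probabilistic framework: it takes the $L^2(\Omega\times[0,\infty))$ limit of the weighted processes to obtain a pair $(Y,Z)$ solving the limiting nonlinear BSDE \eqref{BSDE}, sets $u_0(x)=E^x[Y_0]$, $v_0(x)=E^x[Z_0]$, and then invokes the \emph{linear} theory of Section~\ref{linear.case} once more (with coefficients frozen at $u_0,v_0$) to identify $u_0$ with an $H^1$ weak solution and $v_0$ with its gradient. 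Only after this identification does one know the limit lives in $H^1(D)$ and solves \eqref{PDE}. You should replace your ``pass to the limit in the weak formulation'' step by this BSDE-then-linear-PDE identification, or else supply a separate argument extracting $H^1(D)$ convergence from the weighted estimate.
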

 
\begin{proof}{\bf Existence}: As $u^n$ is the solution of $(\ref{appro.PDE})$, by Theorem \ref{thmprobainterpretation}, we have
\begin{equation*}\begin{split}
u^n(X_t)-u^n(X_s)=&\int_{s}^t \langle \nabla u^n(X_r),dB_r\rangle-\int_s^t q(X_r)u^n(X_r)dr-\int_s^t f(X_r,u^n(X_r),\nabla u^n(X_r))dr\\
&+\int_s^t\langle \nabla u^n,\vec{n}\rangle (X_r)dL_r-\int_s^t g(X_r,u^{n-1}(X_r),\nabla u^{n-1}(X_r))\ast dX_r\\
=&\int_{s}^t \langle \nabla u^n(X_r),dB_r\rangle-\int_s^t q(X_r)u^n(X_r)dr-\int_s^t f(X_r,u^n(X_r),\nabla u^n(X_r))dr\\
&-\int_{s}^t h(X_r,u^n(X_r))dL_r-\int_s^t \langle g(X_r,u^{n-1}(X_r),\nabla u^{n-1}(X_r)), dB_r+d\bar{B}_r\rangle.
\end{split}\end{equation*}
Applying It\^o's formula to $e^{2\int_0^tq(X_r)dr+\lambda t+\mu L_t}|(u^{n+1}-u^n)(X_t)|^2$, we get
\begin{equation*}\begin{split}
&e^{2\int_0^tq(X_r)dr+\lambda t+\mu L_t}|(u^{n+1}-u^n)(X_t)|^2+\int_t^Te^{2\int_0^rq(X_s)ds+\lambda r+\mu L_r}|\nabla (u^{n+1}-u^n)(X_r)|^2dr\\
=& e^{2\int_0^Tq(X_r)dr+\lambda T+\mu L_T}|(u^{n+1}-u^n)(X_T)|^2-\int_t^Te^{2\int_0^rq(X_s)ds+\lambda r+\mu L_r}|(u^{n+1}-u^n)(X_r)|^2(\lambda dr+\mu dL_r)\\
-&2\int_t^T e^{2\int_0^rq(X_s)ds+\lambda r +\mu L_r}(u^{n+1}(X_r)-u^n(X_r))\langle \nabla (u^{n+1}-u^n)(X_r),dB_r\rangle\\
+&2\int_t^T e^{2\int_0^rq(X_s)ds+\lambda r+\mu L_r}(u^{n+1}(X_r)-u^n(X_r))(f_{n+1}(X_r)-f_n(X_r))dr\\
+&2\int_t^Te^{2\int_0^rq(X_s)ds+\lambda r+\mu L_r} (u^{n+1}(X_r)-u^n(X_r))(h_{n+1}(X_r)-h_n(X_r))dL_r\\
+&2\int_t^T e^{2\int_0^rq(X_s)ds+\lambda r+\mu L_r } \langle \nabla u^{n+1}-\nabla u^n,g_n-g_{n-1}\rangle(X_r)dr\\
+&2\int_t^T e^{2\int_0^rq(X_s)ds+\lambda r+\mu L_r} ((u^{n+1}-u^{n})(X_r)) \langle g(X_r,u^{n-1}(X_r),\nabla u^{n-1}(X_r)), dB_r+d\bar{B}_r\rangle.
\end{split}\end{equation*}
Taking expectation $E^m$ on both sides of the above equality, and letting $T$ tends to infinity, since $u^n$ is bounded, we have
$$\lim_{t\rightarrow\infty}E^m[e^{2\int_0^tq(X_r)dr}|(u^{n+1}-u^n)(X_t)|^2]=0$$
and
\begin{equation*}\begin{split}
&E^m\Big[e^{2\int_0^tq(X_r)dr+\lambda t+\mu L_t}|(u^{n+1}-u^n)(X_t)|^2+\int_t^\infty e^{2\int_0^rq(X_s)ds+\lambda r+\mu L_r}|\nabla (u^{n+1}-u^n)(X_r)|^2dr\Big]\\
\leq&\, E^m\int_t^\infty e^{2\int_0^rq(X_u)du+\lambda r+\mu L_r} \Big((-\lambda-2\alpha+\frac{K^2}{\epsilon_1})|(u^{n+1}-u^n)(X_r)|^2+\epsilon_1|\nabla (u^{n+1}-u^n)(X_r)|^2\Big)dr\\
&+\epsilon_2 E^m\int_t^\infty e^{2\int_0^rq(X_s)ds+\lambda r+\mu L_r} |\nabla (u^{n+1}-u^n)(X_r)|^2dr\\
&+(-2\beta-\mu) E^m\int_t^\infty e^{2\int_0^rq(X_s)ds+\lambda r+\mu L_r} | (u^{n+1}-u^n)(X_r)|^2dL_r\\
&+\frac{2k^2}{\epsilon_2}E^m\int_t^\infty e^{2\int_0^rq(X_s)ds+\lambda r+\mu L_r}|(u^{n}-u^{n-1})(X_r)|^2+|\nabla (u^{n}-u^{n-1})(X_r)|^2dr.
\end{split}\end{equation*}
By further calculation, we obtain
\begin{equation*}\begin{split}
&E^m\Big[(\lambda+2\alpha-\frac{K^2}{\epsilon_1})\int_t^\infty  e^{2\int_0^rq(X_s)ds+\lambda r+\mu L_r} |(u^{n+1}-u^n)|^2(X_r)dr\\
&+(1-\epsilon_1-\epsilon_2)\int_t^\infty e^{2\int_0^rq(X_s)ds+\lambda r+\mu L_r}|\nabla(u^{n+1}-u^n)|^2(X_r)dr\Big]\\
\leq&\, \frac{2k^2}{\epsilon_2}E^m\int_t^\infty e^{2\int_0^rq(X_s)ds+\lambda r+\mu L_r}(|(u^{n}-u^{n-1})(X_r)|^2+|\nabla (u^{n}-u^{n-1})(X_r)|^2)dr.
\end{split}\end{equation*}
Suppose $k<\frac{1}{2\sqrt2}$, and choose $\lambda,\epsilon_1,\epsilon_2$ such that $\lambda+2\alpha-\frac{K^2}{\epsilon_1}=1-\epsilon_1-\epsilon_2$ and
$k^2<\frac{\epsilon_2(1-\epsilon_1-\epsilon_2)}{2}$.

(For example,  set $\epsilon_2=\frac{1-\epsilon_1}{2}$, we can choolse $\epsilon_1\in(0,1)$ such that 
 $$\frac{\epsilon_2(1-\epsilon_1-\epsilon_2)}{2}=\frac{(1-\epsilon_1)^2}{8}\in(k^2,\frac{1}{8}),\ i.e.  \ \ 0<\epsilon_1<1-2\sqrt2 k. $$
 
 Furthermore,  if positive number $\alpha$ large  enough satisfying
 $$
 \alpha>\frac{\sqrt{2}}{2}k+\frac{K^2}{1-2\sqrt{2}k},
 $$
 then we can choose $\lambda <0$  such that $$\lambda:=-2\alpha+\frac{K^2}{\epsilon_1}+1-\epsilon_1-\epsilon_2=-2\alpha+\frac{K^2}{\epsilon_1}+\frac{1-\epsilon_1}{2}<0$$
 It also satisfies $\lambda>-2\alpha+K^2$.)

Set $\gamma=\frac{2k^2}{\epsilon_2(1-\epsilon_1-\epsilon_2)}$, then it follows
\begin{equation*}\begin{split}
&E^m\int_t^\infty e^{2\int_0^rq(X_s)ds+\lambda r+\mu L_r} (|(u^{n+1}-u^n)(X_r)|^2+ |\nabla(u^{n+1}-u^n)(X_r)|^2)dr\\
\leq&\, \gamma E^m\int_t^\infty e^{2\int_0^rq(X_s)ds+\lambda r+\mu L_r}( |(u^{n}-u^{n-1})|^2(X_r)+ |\nabla(u^{n}-u^{n-1})|^2(X_r))dr\\
\leq& \cdots\leq \gamma^n E^m\int_t^\infty e^{2\int_0^rq(X_s)ds+\lambda r+\mu L_r}( |u^1|^2(X_r)+ |\nabla u^1|^2(X_r))dr.
\end{split}\end{equation*}

By standard calculation,  since
\begin{equation*}\begin{split}
&e^{2\int_0^tq(X_r)dr+\lambda t+\mu L_t}|u^{1}(X_t)|^2+\int_t^\infty e^{2\int_0^rq(X_s)ds+\lambda r+\mu L_r}|\nabla u^{1}(X_r)|^2dr\\
=& -\int_t^Te^{2\int_0^rq(X_s)ds+\lambda r+\mu L_r}|u^{1}(X_r)|^2(\lambda dr+\mu dL_r)
-2\int_t^T e^{2\int_0^rq(X_s)ds+\lambda r +\mu L_r}u^{1}(X_r)\langle \nabla u^{1}(X_r),dB_r\rangle\\
+&2\int_t^T e^{2\int_0^rq(X_s)ds+\lambda r+\mu L_r}u^{1}(X_r)f_1(X_r)dr
+2\int_t^Te^{2\int_0^rq(X_s)ds+\lambda r+\mu L_r} u^{1}(X_r)h_1(X_r)dL_r\\
+&2\int_t^T e^{2\int_0^rq(X_s)ds+\lambda r+\mu L_r } \langle \nabla u^{1},g_0\rangle(X_r)dr
+2\int_t^T e^{2\int_0^rq(X_s)ds+\lambda r+\mu L_r} u^{1}(X_r)g_0(X_r)(dB_r+d\bar{B}_r),
\end{split}\end{equation*}
then, by the boundedness of $u^1,f,h,g$, we obtain
\begin{equation*}\begin{split}
&E^m\Big[(\lambda+2\alpha-\frac{K^2}{\epsilon_1})\int_t^\infty  e^{2\int_0^rq(X_s)ds+\lambda r+\mu L_r} |u^1(X_r)|^2dr\\&
\quad\quad+(1-\epsilon_1-\epsilon_2)\int_t^\infty e^{2\int_0^rq(X_u)du+\lambda r+\mu L_r}|\nabla u^1(X_r)|^2dr\Big]\\
\leq& E^m\Big[\int_t^\infty e^{2\int_0^rq(X_s)ds+\lambda r+\mu L_r}\big((|u^1(X_r) f_0(X_r)|+\frac{1}{\epsilon_2} |g_0(X_r)|^2)dr+|u^1(X_r) h_0(X_r)|dL_r\big)\Big].
\end{split}\end{equation*}
Therefore, $(e^{\int_0^tq(X_s)ds+\frac{1}{2}\lambda t+\frac{1}{2}\mu L_t}u^n(X_t),e^{2\int_0^tq(X_s)ds+\frac{1}{2}\lambda t+\frac{1}{2}\mu L_t}\nabla u^n(X_t))$ is a Cauchy Sequence in $L^2(\Omega\times[0,\infty))$. We denote the limit as $(\tilde Y_t,\tilde Z_t)$. Set
$$Y_t=e^{-\int_0^tq(X_s)ds-\frac{1}{2}\lambda t-\frac{1}{2}\mu L_t}\tilde{Y}_t,\ \ Z_t=e^{-\int_0^tq(X_s)ds-\frac{1}{2}\lambda t-\frac{1}{2}\mu L_t}\tilde{Z}_t.$$
Then, it is easy to check that $(Y,Z)$ is the solution of the following BSDE
\begin{equation}
\label{BSDE}
\left\{\begin{split}
&Y_t =Y_T+\int_{t}^T\langle Z_r,dB_r\rangle-\int_t^T q(X_r)Y_rdr-\int_t^T f(X_r,Y_r,Z_r)dr+\int_t^T \langle Z_r,\vec{n}\rangle dL_r\\
&\quad\quad-\int_t^T g(X_r,Y_r,Z_r)\ast dX_r, \\
&\lim_{t\rightarrow \infty} e^{\int_0^tq(X_r)dr+\frac{1}{2}\lambda t+\frac{1}{2}\mu L_t}Y_t=0,\ \mbox{in}\ L^2(\Omega,P^x).
\end{split}\right.
\end{equation}
Set $u_0=E^x[Y_0]$ and $v_0=E^x[Z_0]$. Consider the following linear equation
\begin{equation*}
\left\{
\begin{split}
&\frac{1}{2}\Delta \tilde u+q\tilde u-div(g(\cdot,u_0,v_0))+f(\cdot,u_0,v_0)=0,\quad \mbox{on} \  D,\\
&\langle \nabla \tilde u-2 g(\cdot,u_0,v_0), \vec{n}\rangle+h(\cdot,u_0)=0,\quad \mbox{on}\   \partial D,
\end{split}
\right.
\end{equation*}
and by the same method in Section \ref{linear.case}, we find that $u_0=\tilde u$ and $v_0=\nabla \tilde u$. Therefore, $\tilde u$ is a weak solution for PDE $(\ref{PDE})$.   

\textbf{Uniqueness}: Suppose that $\bar u$ is another weak solution of PDE $(\ref{PDE})$.  By the same method in Theorem \ref{thmprobainterpretation} (set $\bar{g}(x)=g(x,\bar{u}(x),\nabla \bar{u}(x))$), we find $\bar{Y}_t=\bar u(X_t)$, $\bar Z_t=\nabla \bar u(X_t)$ satisfies the following BSDE

\begin{equation*}
\left\{\begin{split}
&\bar Y_t =\bar Y_T+\int_{t}^T\langle\bar Z_r,dB_r\rangle-\int_t^T q(X_r)\bar Y_rdr-\int_t^T f(X_r,\bar Y_r,\bar Z_r)dr+\int_t^T \langle \bar Z_r,\vec{n}\rangle dL_r\\
&\quad\quad-\int_t^T g(X_r,\bar Y_r,\bar Z_r)\ast dX_r,\\
&\lim_{t\rightarrow \infty} e^{\int_0^tq(X_r)dr+\frac{1}{2}\lambda t+\frac{1}{2}\mu L_t}\bar Y_t=0,\ \mbox{in}\ L^2(\Omega,P^x).
\end{split}\right.
\end{equation*}
By Ito's formula, we get
\begin{equation*}\begin{split}
&e^{2\int_0^tq(X_r)dr+\lambda t+\mu L_t}|Y_t-\bar Y_t|^2+\int_t^Te^{2\int_0^rq(X_s)ds+\lambda r+\mu L_r}|Z_r-\bar Z_r|^2dr\\
=& -\int_t^\infty e^{2\int_0^rq(X_s)ds+\lambda r+\mu L_r}|Y_r-\bar  Y_r|^2(\lambda dr+\mu dL_r)\\
-&2\int_t^T e^{2\int_0^rq(X_s)ds+\lambda r +\mu L_r}(Y_r-\bar Y_r)\langle Z_r-\bar Z_r,dB_r\rangle\\
+&2\int_t^T e^{2\int_0^rq(X_s)ds+\lambda r+\mu L_r}(Y_r-\bar Y_r)(f(X_r,Y_r,Z_r)-f(X_r,\bar Y_r,\bar Z_r))dr\\
+&2\int_t^Te^{2\int_0^rq(X_s)ds+\lambda r+\mu L_r} (Y_r-\bar Y_r)(h(X_r,Y_r))-h(X_r,\bar Y_r)dL_r\\
+&2\int_t^T e^{2\int_0^rq(X_s)ds+\lambda r+\mu L_r } \langle Z_r-\bar Z_r, g(X_r,Y_r,Z_r)-g(X_r,\bar Y_r,\bar Z_r)\rangle dr\\
+&2\int_t^T e^{2\int_0^rq(X_s)ds+\lambda r+\mu L_r} (Y_r-\bar  Y_r)\langle g(X_r,Y_r,Z_r)-g(X_r,\bar Y_r,\bar Z_r),dB_r+d\bar{B}_r\rangle.
\end{split}\end{equation*}
By the properties of $f,g,h$, it follows that
\begin{equation*}
\begin{split}
E^m[&(\lambda +2\alpha-\frac{K^2}{\epsilon_1}-\frac{2k^2}{\epsilon_2})\int_t^\infty e^{2\int_0^rq(X_s)ds+\lambda r+\mu L_r}|Y_r-\bar  Y_r|^2\\
&+(1-\epsilon_1-\epsilon_2-\frac{2k^2}{\epsilon_2})\int_t^\infty e^{2\int_0^rq(X_s)ds+\lambda r+\mu L_r}|Z_r-\bar  Z_r|^2]	\leq 0.
\end{split}
\end{equation*}
As stated before, we choose $\lambda,\epsilon_1,\epsilon_2$ such that $\lambda-2\alpha-\frac{K^2}{\epsilon_1}=1-\epsilon_1-\epsilon_2$ and
$k^2<\frac{\epsilon_2(1-\epsilon_1-\epsilon_2)}{2}$, then we obtain
$$
E^m[\int_t^\infty e^{2\int_0^rq(X_s)ds+\lambda r+\mu L_r}|Y_r-\bar  Y_r|^2+|Z_r-\bar  Z_r|^2]	\leq 0.
$$
This implies that $|Y_t-\bar Y_t|^2=0$ and $|Z_t-\bar Z_t|^2=0$ for every $t\geq 0$,  which provide that $u=\bar u$ and $\nabla u=\nabla \bar u$.
\end{proof}

The following theorem summerizes the relationship between PDEs and BSDEs. The first part can be proved easily as in Theorem \ref{thmprobainterpretation} and the second part follows the uniqueness in the last theorem.
\begin{theorem}
(1) If $u$ is the weak solution of PDE \eqref{PDE}, then $Y_t=u(X_t)$ and $Z_t=\nabla u(X_t)$ solves BSDE \eqref{BSDE}. \\
(2) Suppose $f,g,h$ and $q$ satisfy conditions \textbf{(C.1)-(C.3)} and \textbf{(i)-(vi)}, if $(Y,Z)$ is the solution of BSDE  \eqref{BSDE}, then $u(x)=E^x[Y_0]$ is the weak solution of PDE \eqref{PDE}.
\end{theorem}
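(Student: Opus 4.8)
The plan is to treat the two implications separately, each reducing to machinery already in place.

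For \textbf{(1)}, I would first \emph{freeze} the nonlinear data in the divergence term. Given a weak solution $u$ of \eqref{PDE}, set $\bar g(x):=g(x,u(x),\nabla u(x))$, which is bounded by \textbf{(v)} and so lies in $L^\infty(D)$. With this fixed field the identity \eqref{weaksolution} says exactly that $u$ is a weak solution of the \emph{linear}-divergence problem \eqref{linear PDE} (with $g$ replaced by $\bar g$, while $f$ and $h$ are still read off at $u$). Theorem \ref{thmprobainterpretation} then applies and gives the forward--backward decomposition \eqref{decomposition u} of $u(X_t)$; since $b=0$ in this section we have $M_t=B_t$, and $\bar g(X_r)\ast dX_r=g(X_r,u(X_r),\nabla u(X_r))\ast dX_r$. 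Setting $Y_t=u(X_t)$ and $Z_t=\nabla u(X_t)$ and rewriting \eqref{decomposition u} over $[t,T]$ turns it into the integral form of \eqref{BSDE} — this is the computation already performed, for a competing solution, in the uniqueness part of Theorem \ref{main.theorem}. It then remains only to verify the terminal constraint $\lim_{t\to\infty}e^{\int_0^tq(X_r)dr+\frac12\lambda t+\frac12\mu L_t}Y_t=0$ in $L^2$, which follows from $\|u\|_\infty<\infty$ and the exponential decay of $E^x[e^{2\int_0^t q(X_r)\,dr+\lambda t+\mu L_t}]$ furnished by \textbf{(C.1)}--\textbf{(C.3)} and $\lambda,\mu<0$.

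For \textbf{(2)}, the cleanest route is through uniqueness, as the author indicates. Under the standing assumptions of Theorem \ref{main.theorem}, that theorem yields a bounded weak solution $u^{\ast}$ of \eqref{PDE}; by part \textbf{(1)} the pair $(u^{\ast}(X_t),\nabla u^{\ast}(X_t))$ solves \eqref{BSDE}, while the It\^o/dissipativity estimate in the uniqueness part of Theorem \ref{main.theorem} shows that \eqref{BSDE} admits at most one solution in the weighted space in play. The given $(Y,Z)$ therefore coincides with $(u^{\ast}(X),\nabla u^{\ast}(X))$, so $Y_0=u^{\ast}(X_0)=u^{\ast}(x)$ $P^x$-a.s. and $u(x)=E^x[Y_0]=u^{\ast}(x)$ is the weak solution. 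The underlying mechanism, which one can also run directly, is the final step of the existence argument of Theorem \ref{main.theorem}: set $u_0(x):=E^x[Y_0]$, $v_0(x):=E^x[Z_0]$, identify $Y_t=u_0(X_t)$ and $Z_t=v_0(X_t)$, solve the linear-divergence problem frozen at $(u_0,v_0)$ — whose unique bounded weak solution $\tilde u$ is provided by Theorem \ref{theorem linear} — and use the matching argument of Section \ref{linear.case} to get $\tilde u=u_0$ and $\nabla\tilde u=v_0$; since the frozen data then coincide with the solution itself, $u=u_0=\tilde u$ solves \eqref{PDE} weakly.

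The main obstacle in both parts is not the stochastic calculus but the faithful passage between the weak ($H^1$) object and the pathwise (BSDE) object. The decisive point is the Markovian representation $Y_t=u_0(X_t)$, $Z_t=\nabla u_0(X_t)$ with $u_0(x)=E^x[Y_0]$, and in particular the fact that averaging the second component returns the spatial gradient, $v_0=\nabla u_0$. Establishing $Y_t=u_0(X_t)$ is exactly where the uniqueness of the BSDE is consumed — the flow of a uniquely solvable BSDE is a function of the current state — so this step, rather than any computation, carries the content; it is the information harvested in Section \ref{linear.case}, resting on the uniqueness of \eqref{bsde}, on the Fukushima-type decomposition (Proposition \ref{decompositionofG}), and on the reading of the singular term through $g\ast dX$ and Lemma \ref{div g}. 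Once this representation is secured both implications close at once, which is why the statement is properly a corollary of Theorems \ref{thmprobainterpretation} and \ref{main.theorem}.
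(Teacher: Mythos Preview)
Your proposal is correct and matches the paper's own argument: part (1) is reduced to Theorem~\ref{thmprobainterpretation} by freezing $\bar g(x)=g(x,u(x),\nabla u(x))$, exactly as the paper indicates, and part (2) is deduced from the uniqueness established in Theorem~\ref{main.theorem}. Your write-up simply fills in more detail than the paper's one-line justification, but the route is the same.
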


\newpage


\begin{thebibliography}{99}

\bibitem{BH}  R. F. Bass and P. Hsu,
\textit{Some potential theory for reflecting Brownian motion in H\"{o}lder and Lipschitz domains},
 Ann. Prob. \textbf{19} (1991), 486-508.

\bibitem{CZ} Z. Q. Chen and T. S. Zhang,
\textit{ Time-reversal and elliptic boundary value problems},
Ann. Prob. \textbf{37} (2009), 1008-1043.

\bibitem{CZ2} Z. Q. Chen and T. S. Zhang,
\textit{ A probabilistic approach to mixed boundary value problems for elliptic operators with singular coefficients}, 
Proceeding of the American Mathematical Society \textbf{142(6)} (2014), 2135-2149.
 
\bibitem{DP} R. W. R. Darling and E. Pardoux,
\textit{Backwards SDE with random terminal time and applications to semilinear elliptic PDE},
Ann. Prob. \textbf{25} (1997), 1135-1159.

\bibitem{Gernard} W. D. Gerhard,
\textit{The probabilistic solution of the Dirichlet problem for $\frac{1}{2}\nabla+\langle a,\nabla \rangle+b$ with singular coefficients},
J. Theoret. Proba. \textbf{5} (1992), 503-520.
 
\bibitem{GT} D.Gilbarg and N.S.Trudinger, 
\textit{Elliptic Partial Differential Equations of Second Order Reprint of Second Order, reprint of the 1998 Edition.}
Grundlehren Der Mathematischen Wissenschaften {\bf 224(3)} (2001), 469-484. 

\bibitem{H} Y. Hu,
\textit{Probabilistic interpretation of a system of quasilinear elliptic partial differential equations under Neumann boundary conditions},
Stoch. Proc. Appl. \textbf{48} (1993), 107-121.    
    
\bibitem{PEI} P. Hsu,
\textit{Probabilistic Approach to the Neumann Problem}, Comm. Pure and Appl. Math. \textbf{38(4)} (1985), 445-472.    

\bibitem{K} S. Kakutani,
\textit{Two-dimensional Brownian motion and harmonic functions},
Proc. Imp. Acad. (Tokyo) \textbf{20} (1944), 706-714.

\bibitem{LS} P. L. Lions and A. S. Sznitman,
\textit{Stochastic differential equations with reflecting boundary conditions},
Comm. Pure and Appl. Math. \textbf{37} (1984), 511-537.

\bibitem{PZ} E. Pardoux and S. G. Zhang,
\textit{Generalized BSDEs and nonlinear Neumann boundary value problems}, 
Probab. Theory Related Fields {\bf 110} (1998), 535-558.

\bibitem{S} L.Stoica, \textit{A probabilistic interpretaion of the divergence and BSDE's}, 
Stoch. Proc. Appl. {\bf 103} (2003), 31-55.
    
\bibitem{T} N. S. Tr\"{u}dinger,
\textit{Linear elliptic operators with measurable coefficients},
Ann. Scuola Norm. Sup. Pisa \textbf{27} (1973), 265-308.    
   
 \bibitem{YZ} X. Yang and  T. S. Zhang, \textit{Mixed Boundary Value  Problems of  Semilinear Elliptic PDEs  and BSDEs with Singular Coefficients}, 
 Stoch. Proc. Appl. {\bf124} (2014), 2442-2478. 
 
\bibitem{Z} T. S. Zhang,
\textit{A probabilistic approach to Dirichlet problems of semilinear elliptic PDEs with singular coefficients},
Ann.Prob. \textbf{39} (2011),1502-1527.

\end{thebibliography}
\end{document}